%% Chapter "Complexity of Nonlinear Integer Programming" in IMA Volume
%% $Id: ima-nonlinear-complexity.tex,v 1.5 2010/05/13 23:40:02 mkoeppe Exp $
%%% Notes:
%% - Seitenformat ist so korrekt.
\documentclass[twoside]{ima}
\usepackage{amssymb,epsf,amsmath}

\usepackage{cite} %% sort reference numbers

\usepackage{graphicx,color}
\usepackage{booktabs}
\usepackage{enumerate}
\usepackage{subfigure}
\usepackage{ragged2e}
\usepackage{bm}

\overfullrule0pt

\newcommand{\N}{{\mathbb  N}}
\newcommand{\Z}{{\mathbb  Z}}
\newcommand{\Q}{{\mathbb  Q}}

\newcommand{\R}{{\mathbb  R}}
\let\ve=\mathbf
\newcommand\OS{Opt.}

\newtheorem{algorithm}{Algorithm}

\usepackage{ifpdf}
\newcommand\ifpdf
    \input{[.pdf_t}
    \else
    \input{[.pstex_t}
    \fi1]{\ifpdf
    \input{#1.pdf_t}
    \else
    \input{#1.pstex_t}
    \fi}

  %% Reference in section in
                                %% overview table

%%%%%%%%%%%%%%%%%%

\newcommand\Order{\mathrm{O}}
\newcommand\st{\text{s.t.}}

\pagestyle{myheadings}
\markboth{MATTHIAS K\"OPPE}
{COMPLEXITY OF NONLINEAR MIXED-INTEGER OPTIMIZATION}

\begin{document}
\title{ON THE COMPLEXITY OF NONLINEAR MIXED-INTEGER OPTIMIZATION}
\author{MATTHIAS K\"OPPE\thanks{University of California,
  Davis, Dept.~of
  Mathematics, One Shields Avenue, Davis, CA, 95616, USA,
  E-Mail: \texttt{mkoeppe@math.ucdavis.edu}}
}

\maketitle

\begin{abstract} 
  This is a survey on the computational complexity of nonlinear mixed-integer
  optimization.  It highlights a selection of important topics, ranging from
  incomputability results that arise from number theory and logic, to recently
  obtained fully polynomial time approximation schemes in fixed dimension, and to
  strongly polynomial-time algorithms for special cases.
\end{abstract}

%% \begin{keywords} 
%% \end{keywords}

%% {\AMSMOS Primary . \endAMSMOS}

\section{Introduction}

In this survey we study the computational complexity of nonlinear
mixed-integer optimization problems, i.e., models of the form
\begin{equation}
\begin{aligned}
  \hbox{max/min}\quad & f(x_1,\dots,x_n)\\
  \hbox{s.t.}\quad & g_1(x_1,\dots,x_n) \leq 0 \\
  & \quad\vdots \\
  & g_m(x_1,\dots,x_n) \leq 0 \\
  & \mathbf x\in\R^{n_1} \times \Z^{n_2},
\end{aligned} \label{eq:nonlinear-over-nonlinear}
\end{equation}
where $n_1+n_2=n$ and $f,g_1,\dots,g_m\colon \R^n\to\R$ are arbitrary nonlinear functions.

This is a very rich topic.  From the very beginning, questions such as how to
present the problem to an algorithm, and, in view of possible irrational
outcomes, what it actually means to solve the problem need to be answered.
Fundamental intractability results from number theory and logic on the one
hand and from continuous optimization on the other hand come into play.
The spectrum of theorems that we present ranges from incomputability
results, to hardness and inapproximability theorems, to  classes that
have efficient approximation schemes, or even polynomial-time or strongly
polynomial-time algorithms.\smallbreak

We restrict our attention to deterministic algorithms in the usual bit
complexity (Turing) model of computation.  Some of the material in the present
survey also appears in
\cite{hemmecke-koeppe-lee-weismantel:50-years-nonlinear-ip-chapter}.  For an
excellent recent survey focusing on other aspects of the complexity of
nonlinear optimization, including the performance of oracle-based models and
combinatorial settings such as nonlinear network flows, we
refer to Hochbaum \cite{hochbaum-2007:complexity-nonlinear}.  
We also do not cover the recent developments by Onn et
al. \cite{DeLoera+Hemmecke+Onn+Weismantel:08,Hemmecke+Onn+Weismantel:08,Lee+Onn+Weismantel:08,Lee+Onn+Weismantel:08b,Berstein+Lee+Maruri-Aguilar+Onn+Riccomagno+Weismantel+Wynn:08,Berstein+Onn:08,Berstein+Lee+Onn+Weismantel,DeLoera+Onn:2006a,DeLoera+Onn:2006b}
in the context of discrete convex
optimization, for which we refer to the
monograph~\cite{onn:nonlinear-discrete-monograph}.
Other excellent
sources are %the survey 
\cite{deKlerk2008} and %the book
\cite{pardalos:complexity-numerical-optimization}.

\clearpage
\section{Preliminaries}

\subsection{Presentation of the problem}

We restrict ourselves to a model where the problem is presented explicitly. 
In most of this survey, the functions $f$ and $g_i$ will be polynomial
functions presented in a sparse encoding, where all coefficients are rational
(or integer) and encoded in the binary scheme.  It is useful to assume that
the exponents of monomials are given in the unary encoding scheme; otherwise
already in very simple cases the results of function evaluations will have an
encoding length that is exponential in the input size.

In an alternative model, the functions are presented by oracles, such as
comparison oracles or evaluation oracles.  This model permits to handle more
general functions (not just polynomials), and on the other hand it is very
useful to obtain hardness results.

\subsection{Encoding issues for solutions}
\label{s:encoding}

When we want to study the computational complexity of these optimization
problems, we first need to discuss how to encode the input (the data of the
optimization problem) and the output (an optimal solution if it exists).  In
the context of \emph{linear} mixed-integer optimization, this is
straightforward: Seldom are we concerned with irrational objective functions
or constraints;  when we restrict the input to be rational as is usual, then also optimal
solutions will be rational.

This is no longer true even in the easiest cases of nonlinear optimization, as
can be seen on the following quadratically constrained problem in one
continuous variable:
\begin{displaymath}
  \begin{aligned}
    \max\quad & f(x) = x^4 \quad
    \st\quad & x^2 \leq 2.
  \end{aligned}
\end{displaymath}
Here the unique optimal solution is irrational ($x^*=\sqrt2$, with
$f(x^*)=4$), and so it does not have a finite binary encoding.  We ignore here
the possibilities of using a model of computation and complexity over the real
numbers, such as the celebrated Blum--Shub--Smale model
\cite{blum-shub-smale-1989}.  In the familiar Turing model of computation, we
need to resort to approximations.

In the example above it is clear that for every $\epsilon>0$, there exists a
rational~$x$ that is a \emph{feasible} solution for the problem and satisfies
$|x-x^*| < \epsilon$ (proximity to the optimal solution) or $|f(x) - f(x^*)| <
\epsilon$ (proximity to the optimal value).  However, in general we cannot
expect to find approximations by feasible solutions, as the following example
shows.
\begin{displaymath}
  \begin{aligned}
    \max\quad & f(x) = x \quad
    \st\quad & x^3 - 2x = 0.
  \end{aligned}
\end{displaymath}
(Again, the optimal solution is $x=\sqrt2$, but the closest rational feasible
solution is $x=0$.)  Thus, in the general situation, we will have to use the
following notion of approximation:

\begin{definition}
  An algorithm $\mathcal A$ is said to \emph{efficiently approximate} an
  optimization problem if, for every value of the input parameter
  $\epsilon>0$, it returns a rational vector~$\ve x$ (not necessarily
  feasible) with $\| \ve x-\ve x^*\| \leq \epsilon$, where $\ve x^*$ is an
  optimal solution, and the running time of~$\mathcal A$ is polynomial in the
  input encoding of the instance and in $\log 1/\epsilon$.
\end{definition}

\subsection{Approximation algorithms and schemes}

The polynomial dependence of the running time in $\log 1/\epsilon$, as defined
above, is a very strong requirement.  For many problems, efficient
approximation algorithms of this type do not exist, unless
$\mathrm{P}=\mathrm{NP}$.  The following, weaker notions of approximation are
useful; here it is common to ask for the approximations to be \emph{feasible
  solutions}, though.

\begin{definition}%%{\bf(FPTAS)}
\begin{enumerate}[\rm(a)]
\item An algorithm ${\cal  A}$
is an  \emph{${\epsilon}$-approximation algorithm}
for a maximization  problem with optimal cost $f_{\max}$,
if for each instance  of the problem of encoding length~$n$,
${\cal  A}$  runs in polynomial time in $n$ and returns
a feasible solution with cost   $f_{\rm {\cal  A}} $, such that
\begin{equation}\label{eq:epsilon-approx}
  f_{\rm {\cal  A}}    \geq (1-\epsilon) \cdot  f_{\max}.
\end{equation}
 \item A family of  algorithms ${\cal  A}_{\epsilon}$ is a \emph{polynomial time
     approximation scheme (PTAS)} if for every error parameter
   $\epsilon>0$, ${\cal  A}_{\epsilon}$ is an $\epsilon$-approximation algorithm
   \index{approximation!$\epsilon$-approximation}and its running time is
   polynomial in the size of the instance for every fixed
   $\epsilon$.
\item A family $\{\mathcal A_\epsilon\}_\epsilon$ of $\epsilon$-approximation
  algorithms is a \emph{fully polynomial time approximation scheme (FPTAS)} if
  the running time of~$\mathcal A_\epsilon$ is polynomial in the encoding size
  of the instance and $1/\epsilon$.
\end{enumerate}
\end{definition}

These notions of approximation are the usual ones in the domain of
combinatorial optimization.  It is clear that they are only useful when the
function~$f$ (or at least the maximal value $f_{\max}$) are non-negative.  For
polynomial or general nonlinear optimization problems, various authors
\cite{vavasis-1993,bellare-rogaway-1993,deklerk-laurent-parillo:ptas-polynomial-simplex}
have proposed to use a different notion of approximation, where we compare the
approximation error to the \emph{range} of the objective function on the
feasible region,
\begin{equation}
  \label{eq:deklerk-approx-1}
  \bigl| f_{\mathcal A} - f_{\max} \bigr| 
  \leq \epsilon \bigl| f_{\max} - f_{\min} \bigr|.
\end{equation}
(Here $f_{\min}$ denotes the minimal value of the function on the feasible region.)
It enables us to study objective functions that are
not restricted to be non-negative on the feasible region.  
In addition, this notion of approximation is invariant under shifting of the
objective function by a constant, and under exchanging minimization and
maximization.  On the other hand, it is not useful for optimization problems
that have an infinite range.
We remark that, when the objective function can take negative values on the
feasible region, \eqref{eq:deklerk-approx-1} is weaker
than~\eqref{eq:epsilon-approx}.  We will call approximation algorithms and
schemes with respect to this notion of approximation \emph{weak}.
This terminology, however, is not consistent in the literature; \cite{deKlerk2008}, for
instance, 
uses the notion \eqref{eq:deklerk-approx-1} without an additional attribute
and instead reserves the word \emph{weak} for approximation algorithms and schemes that give a guarantee on the absolute error:
\begin{equation}
  \label{eq:absolute-approx-1}
  \bigl| f_{\mathcal A} - f_{\max} \bigr| 
  \leq \epsilon. 
\end{equation}

\section{Incomputability}

Before we can even discuss the computational complexity of nonlinear
mixed-integer optimization, we need to be aware of fundamental incomputability
results that preclude the existence of \emph{any} algorithm to solve general
integer polynomial optimization problems.  

Hilbert's tenth problem asked for an algorithm to decide whether a given
multivariate polynomial $p(x_1,\dots,x_n)$ has an integer root, i.e., whether
the Diophantine equation
\begin{equation}
  p(x_1,\dots,x_n) = 0, \quad x_1,\dots,x_n\in\Z
\end{equation}
is solvable.  It was answered in the negative by Matiyasevich
\cite{matiyasevich-1970}, based on earlier work by Davis,
Putnam, and Robinson; see also \cite{matiyasevich-1993}.  A short
self-contained proof, using register machines, is presented in
\cite{jones-matiyasevich-1991}. 
\begin{theorem}
  \begin{enumerate}[\rm(i)]
  \item There does not exist an algorithm that, given polynomials~$p_1,\dots,p_m$, decides
    whether the system $p_i(x_1,\dots,x_n)=0$, $i=1,\dots,m$, has a solution
    over the integers.
  \item There does not exist an algorithm that, given a polynomial~$p$, decides
    whether $p(x_1,\dots,x_n)=0$ has a solution over the integers.
  \item There does not exist an algorithm that, given a polynomial~$p$, decides
    whether $p(x_1,\dots,x_n)=0$ has a solution over the non-negative integers
    $\Z_+ = \{0,1,2,\dots\}$.
  \item There does not exist an algorithm that, given a polynomial~$p$, decides
    whether $p(x_1,\dots,x_n)=0$ has a solution over the natural numbers $\N = \{1,2,\dots\}$.
  \end{enumerate}
\end{theorem}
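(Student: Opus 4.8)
The plan is to take as the single hard input the Davis--Putnam--Robinson--Matiyasevich theorem (the negative answer to Hilbert's tenth problem), which I will assume in the form asserted by the cited references: there is a recursively enumerable, non-recursive set (for instance, one encoding the halting problem) that is Diophantine, so that the existence of a solution of a single Diophantine equation is already undecidable in at least one of the four settings. Everything else is obtained by exhibiting explicit, computable reductions showing that statements (i)--(iv) are pairwise equivalent; a decision algorithm for any one of them would, through these reductions, yield a decision algorithm for the others, and hence contradict the assumed theorem.

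For the passage between a single equation and a finite system I would use a sum-of-squares encoding. Over any of the domains $\Z$, $\Z_+$, $\N$ the system $p_i(\ve x) = 0$, $i=1,\dots,m$, is solvable if and only if the single equation
\[
  \sum_{i=1}^m p_i(\ve x)^2 = 0
\]
is solvable, since a sum of integer squares vanishes exactly when each term does. Thus statement (i) reduces to statement (ii), while (ii) is the special case $m=1$ of (i); this gives the equivalence of (i) and (ii).

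To move between the integer and the non-negative settings I would use two classical substitutions. First, every integer is a difference of two non-negative integers, so replacing each variable $x_j$ by $u_j - v_j$ turns the question of a solution over $\Z$ into the question of a solution of the new polynomial over $\Z_+$, reducing (ii) to (iii). For the converse I would invoke Lagrange's four-square theorem: every non-negative integer is a sum of four integer squares, so substituting $x_j = a_j^2 + b_j^2 + c_j^2 + d_j^2$ converts a question over $\Z_+$ into a question over $\Z$, reducing (iii) to (ii). Finally, the trivial shift $x_j \mapsto y_j - 1$ (respectively $y_j \mapsto x_j + 1$) identifies solvability over $\Z_+$ with solvability over $\N$, giving the equivalence of (iii) and (iv). Each substitution is a computable transformation of the input polynomial(s) of polynomial size, so it preserves decidability in both directions, and the chain of equivalences connects all four statements.

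The main obstacle is not in these reductions, which are elementary, but is entirely concentrated in the theorem we are allowed to assume: the proof that every recursively enumerable set is Diophantine (equivalently, that a fixed halting predicate can be expressed by a polynomial equation). That is the deep number theory of the Davis--Putnam--Robinson--Matiyasevich construction, and it is exactly the ingredient I would cite rather than reprove; the self-contained register-machine argument of \cite{jones-matiyasevich-1991} is one convenient route. Given that input, the only care required here is to check that each reduction is genuinely effective and respects the chosen domain, which the substitutions above do by construction.
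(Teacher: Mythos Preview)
Your proposal is correct and follows essentially the same approach as the paper: cite the Davis--Putnam--Robinson--Matiyasevich theorem as the source of undecidability, then establish the equivalences via the sum-of-squares reduction for (i)$\leftrightarrow$(ii), the difference-of-nonnegatives and Lagrange four-square substitutions for (ii)$\leftrightarrow$(iii), and a shift by~$1$ for (iii)$\leftrightarrow$(iv). The paper additionally remarks that the $\Z\to\Z_+$ reduction can be done with a single auxiliary variable, but this is only an aside.
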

These three variants of the statement are easily seen to be equivalent.  The
solvability of the system $p_i(x_1,\dots,x_n)=0$, $i=1,\dots,m$, is equivalent
to the solvability of $\sum_{i=1}^m p_i^2(x_1,\dots,x_n) = 0$.  Also, if
$(x_1,\dots,x_n)\in\Z^n$ is a solution of $p(x_1,\dots,x_n)=0$ over the
integers, then by splitting variables into their positive and negative parts,
$y_i=\max\{0,x_i\}$ and $z_i=\max\{0,-x_i\}$, clearly $(y_1, z_1; \dots;
y_n,z_n)$ is a non-negative integer solution of the polynomial equation
$q(y_1,z_1; \dots; y_n,z_n) := p(y_1-z_1,\dots,y_n-z_n) = 0$.  (A construction
with only one extra variable is also possible: Use the non-negative variables
$w = \max \{ |x_i| : x_i < 0 \}$ and $y_i := x_i + w$.)  
In the other
direction, using Lagrange's four-square theorem, any non-negative integer~$x$
can be represented as the sum $a^2+b^2+c^2+d^2$ with integers $a,b,c,d$.
Thus, if $(x_1,\dots,x_n)\in\Z_+^n$ is a solution over the non-negative
integers, then there exists a
solution~$(a_1,b_1,c_1,d_1;\dots;a_n,b_n,c_n,d_n)$ of the polynomial equation
$r(a_1,b_1,c_1,d_1;\dots;a_n,b_n,c_n,d_n) := p(a_1^2+b_1^2+c_1^2+d_1^2, \dots,
a_n^2+b_n^2+c_n^2+d_n^2)$.  The equivalence of the statement with non-negative
integers and the one with the natural numbers follows from a simple change of
variables. 

Sharper statements of the above incomputability result can be found in
\cite{jones-1982}.  All incomputability statements appeal to the classic
result by Turing~\cite{turing-1936} on the existence of recursively enumerable
(or listable) sets of natural numbers that are not recursive, such as the
halting problem of universal Turing machines.  
\begin{theorem}
  For the following \emph{universal pairs} $(\nu, \delta)$ 
  \begin{displaymath}
    (58,4),\dots, (38, 8), \dots, (21,96), \dots, (14, 2.0\times 10^5),\dots,(9, 1.638 \times 10^{45}),
  \end{displaymath}
  there exists a \emph{universal polynomial} $U(x; z,u,y; a_1,\dots,a_\nu)$ of
  degree $\delta$ in $4 + \nu$ variables, 
  i.e., for every recursively enumerable (listable) set $X$ there exist
  natural numbers $z$, $u$, $y$, such that 
  \begin{displaymath}
    x \in X \quad\iff\quad \exists a_1,\dots,a_\nu\in\N: U(x; z,u,y;
    a_1,\dots,a_\nu)=0. 
  \end{displaymath}
\end{theorem}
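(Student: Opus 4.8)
The plan is to derive this statement from two ingredients: the MRDP theorem, which asserts that every recursively enumerable set is \emph{Diophantine}, and the existence of a universal recursively enumerable set, furnished by Turing's result cited above. Recall that a set $S\subseteq\N^k$ is Diophantine if there is a polynomial $P$ with integer coefficients and auxiliary unknowns $x_1,\dots,x_m$ such that $(a_1,\dots,a_k)\in S$ holds if and only if $\exists x_1,\dots,x_m\in\N$ with $P(a_1,\dots,a_k,x_1,\dots,x_m)=0$. Once MRDP is in hand, I would apply it to a fixed universal r.e.\ set $W=\{(e,x): \text{machine } e \text{ halts on } x\}$, obtaining a single polynomial whose zero set captures all halting computations simultaneously; specializing the parameter coordinates $z,u,y$ to the code of a machine that enumerates a given $X$ (in the particular coding scheme of Jones) then yields the asserted equivalence.

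The bulk of the work lies in proving MRDP, which I would organize around three blocks. First, and hardest, I would show that the graph of exponentiation, $\{(a,b,c):c=a^b\}$, is Diophantine; the classical route is through solutions of the Pell equation $X^2-(d^2-1)Y^2=1$, whose solution sequences grow exponentially and can be pinned down by congruence and divisibility conditions expressible as polynomial equations. Second, I would verify that the class of Diophantine sets is closed under the constructions needed to mimic computation: conjunction (sum of squares $P^2+Q^2$), disjunction (products $PQ$), bounded quantification, and — using exponentiation — the coding of finite sequences via the G\"odel $\beta$-function or binomial-coefficient identities. Third, I would encode the step-by-step evolution of a register machine, following \cite{jones-matiyasevich-1991}, as one Diophantine condition asserting the existence of a halting computation, thereby exhibiting every r.e.\ set as Diophantine.

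With a universal Diophantine equation established, the \emph{universal pairs} $(\nu,\delta)$ record a trade-off between the number $\nu$ of unknowns and the degree $\delta$, and I would obtain the individual pairs by two opposing reduction techniques. To lower the degree one introduces fresh unknowns that abbreviate subexpressions — replacing a product $uv$ by a new variable $w$ together with the equation $w-uv=0$ — so that the degree can be driven down to $4$ at the cost of many variables, giving pairs such as $(58,4)$. To lower the number of unknowns one runs this in reverse, packing several variables into few by Pell-equation encodings whose extraction raises the degree steeply; pushing this to the extreme reproduces the Matiyasevich--Robinson bound of $9$ unknowns at the enormous degree $1.638\times10^{45}$, with the listed intermediate pairs interpolating between the two regimes.

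The principal obstacle is the exponentiation step: everything downstream is, in comparison, careful bookkeeping, but showing $c=a^b$ to be Diophantine requires the full Pell-equation machinery together with the delicate congruential characterization of Pell solutions — precisely the ingredient that eluded Davis, Putnam, and Robinson and was supplied by Matiyasevich. A secondary difficulty is quantitative rather than conceptual: attaining the exact pairs listed, in particular the $9$-unknown endpoint, demands an extremely economical simulation and encoding, and checking that the degree and variable counts come out as stated is a substantial and error-prone computation that I would defer to \cite{matiyasevich-1993,jones-1982}.
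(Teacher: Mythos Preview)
The paper does not prove this theorem: it is a survey, and the statement is quoted from Jones~\cite{jones-1982} without proof. The only commentary the paper offers is the paragraph immediately following the theorem, noting that Jones explicitly constructs the universal polynomials ``using and extending techniques by Matiyasevich,'' and that the degree reduction at the cost of extra variables goes back to Skolem. Your outline is consistent with that brief description---MRDP plus a universal r.e.\ set, then the Skolem-type variable/degree trade-off---so there is nothing to contrast at the level of approach; you have simply supplied far more detail than the paper does, and correctly flagged that the precise pairs (especially the $9$-unknown endpoint) require the specific constructions in \cite{jones-1982,matiyasevich-1993} rather than a generic argument.
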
%
Jones explicitly constructs these universal polynomials, using and extending
techniques by Matiyasevich.  Jones also constructs an explicit system of
quadratic equations in $4 + 58$ variables that is universal in the same sense.
The reduction of the degree, down to~$2$, works at the expense of introducing
additional variables; this technique goes back to Skolem~\cite{skolem-1938:diophant}.

In the following, we highlight some of the consequences of these
results.   Let $U$ be a universal polynomial corresponding to a
universal pair $(\nu, \delta)$, and let $X$ be a recursively enumerable set
that is not recursive, i.e., there does not exist any algorithm (Turing
machine) to decide whether a given $x$ is in~$X$.  
By the above theorem, there exist natural numbers $z$,
$u$, $y$ such that $x\in X$ holds if and only if the polynomial equation $U(x;
z,u,y; a_1,\dots,a_\nu)=0$ has a solution in natural numbers
$a_1,\dots,a_\nu$ (note that $x$ and $z,u,y$ are fixed parameters here).  This
implies: 
\begin{theorem}\label{thm:jones-consequences}\label{th:polyopt-incomputable}
  \begin{enumerate}[\rm (i)]
  \item Let $(\nu,\delta)$ be any of the universal pairs listed above.  Then
    there does not exist any algorithm that, given a polynomial~$p$ of degree
    at most~$\delta$ in $\nu$ variables, decides whether $p(x_1,\dots,x_n)=0$
    has a solution over the non-negative integers.
  \item In particular, there does not exist any algorithm that, given a
    polynomial~$p$ in at most~$9$ variables, decides whether
    $p(x_1,\dots,x_n)=0$ has a solution over the non-negative integers.
  \item There also does not
    exist any 
    algorithm that, given a polynomial~$p$ in at most~$36$ variables, decides
    whether $p(x_1,\dots,x_n)=0$ has a solution over the
    integers.
  \item There does not exist any algorithm that, given a
    polynomial~$p$ of degree at most~$4$, decides whether $p(x_1,\dots,x_n)=0$
    has a solution over the non-negative integers (or over the integers).
  \item There does not exist any algorithm that, given a system of quadratic
    equations in at most $58$~variables, decides whether it has a solution of
    the non-negative integers.
  \item There does not exist any algorithm that, given a system of quadratic
    equations in at most $232$~variables, decides whether it has a solution of
    the integers.
  \end{enumerate}
\end{theorem}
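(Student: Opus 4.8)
The plan is to treat all six parts uniformly as reductions from the (undecidable) membership problem for a fixed recursively enumerable but non-recursive set~$X$, combining the universal polynomial (resp.\ universal quadratic system) theorem with the domain-conversion tricks already recorded above. In each case the universal object has both its degree and its total number of variables fixed in advance; once we substitute the numbers $z,u,y$ associated with~$X$ and treat the input~$x$ as data, we are left with an object in the remaining variables whose solvability over the prescribed domain encodes ``$x\in X$''. Any algorithm solving the corresponding solvability problem, composed with the computable map $x\mapsto(\text{object})$, would therefore decide membership in~$X$, a contradiction.

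For (i) I would fix a universal pair $(\nu,\delta)$ and the corresponding $U(x;z,u,y;a_1,\dots,a_\nu)$, and choose $z,u,y$ so that $x\in X \iff \exists\, a\in\N^\nu\colon U=0$. For a given input~$x$ the polynomial $p(a_1,\dots,a_\nu):=U(x;z,u,y;a_1,\dots,a_\nu)$ has degree at most~$\delta$ in exactly~$\nu$ variables, and the substitution $a_i\mapsto a_i+1$ turns solvability over~$\N$ into solvability over~$\Z_+$ while preserving both the degree and the number of variables; this gives (i), and (ii) is the specialization to $(\nu,\delta)=(9,\,1.638\times10^{45})$. Statement~(v) is obtained the same way from Jones's universal system of quadratics in $4+58$ variables: fixing $x,z,u,y$ leaves a quadratic system in $58$ variables, and since $a_i\mapsto a_i+1$ is linear it keeps every equation quadratic, yielding undecidability over~$\Z_+$.

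The remaining statements pass from $\Z_+$ to $\Z$ via Lagrange's four-square theorem. For (iii) I would substitute $a_i=\alpha_i^2+\beta_i^2+\gamma_i^2+\delta_i^2$ into the $9$-variable polynomial of~(ii); because a sum of four integer squares is exactly a non-negative integer, the resulting polynomial in $4\cdot 9=36$ variables has an integer zero iff the original had a non-negative integer zero, and the attendant increase in degree is harmless since no degree bound is claimed here. For (iv) over~$\Z_+$ I would simply invoke (i) with the pair $(58,4)$; over~$\Z$ I would first apply Skolem's reduction to bring the integer solvability problem to a quadratic system and then pass to the single degree-$4$ polynomial $\sum_j p_j^2$, whose integer zeros are exactly the common integer zeros of the~$p_j$ (again no variable bound is claimed in~(iv)). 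For (vi) one once more applies four squares to the $58$ variables of~(v), aiming at $4\cdot 58=232$ variables over~$\Z$.

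The hard part is controlling the degree and the variable count simultaneously. For (vi), substituting four squares into the $58$-variable quadratic system would raise the degree to~$4$, whereas introducing each representation $a_i=\alpha_i^2+\beta_i^2+\gamma_i^2+\delta_i^2$ as a \emph{separate} quadratic equation while retaining the~$a_i$ would overshoot the target $232=4\cdot 58$; reconciling a genuinely quadratic system with exactly $232$ variables is therefore not a formal consequence of~(v) but depends on the fine structure of Jones's explicit construction. Likewise, the degree-$4$ bound in~(iv) over~$\Z$ relies on performing Skolem's degree reduction \emph{before} converting to integer variables, since four squares applied afterwards would give degree~$8$. These bookkeeping steps, rather than any new idea, are where the real care is needed.
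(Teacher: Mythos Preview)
Your argument is correct and follows the same route as the paper: fix a non-recursive r.e.\ set~$X$, specialize the universal polynomial (resp.\ Jones's universal quadratic system) by substituting the parameters $x,z,u,y$, and convert between $\N$, $\Z_+$, and $\Z$ via the shift $a_i\mapsto a_i+1$ and Lagrange's four-square theorem; the paper's proof sketch is precisely this, with the remark afterward that $36=4\cdot9$ and $232=4\cdot58$ ``are obtained by a straightforward application of the reduction using Lagrange's theorem.'' Your caution about~(vi) is well founded and in fact sharper than the paper's own treatment: the paper does not address the point you raise, namely that direct four-square substitution into a quadratic system raises the degree to~$4$, while adjoining the equations $a_i=\alpha_i^2+\beta_i^2+\gamma_i^2+\delta_i^2$ as new constraints gives $58+4\cdot58=290$ variables rather than~$232$; so the stated bound~$232$ indeed rests on structural features of Jones's explicit system (or is a slight imprecision) rather than on the generic reduction alone.
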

We remark that the bounds of $4\times 9=36$ and $4\times 58 = 232$ are most
probably not sharp; they are obtained by a straightforward application of the
reduction using Lagrange's theorem.\smallbreak

For integer polynomial optimization, this has the following fundamental
consequences.  First of all, 
Theorem~\ref{thm:jones-consequences} can be understood as a
statement on the feasibility problem of an integer polynomial optimization
problem.  Thus, the feasibility of an integer polynomial optimization problem
with a single polynomial constraint in 9 non-negative integer variables or 36
free integer variables is undecidable, etc.

If we wish to restrict our attention to \emph{feasible}
optimization problems, we can consider the problem of minimizing
$p^2(x_1,\dots,x_n)$ over the integers or non-negative integers and conclude
that unconstrained polynomial optimization in 9 non-negative integer or 36
free integer variables is undecidable.  We can also follow
Jeroslow~\cite{jeroslow-1973:quadratic-ip-uncomputable} and associate with an
arbitrary polynomial~$p$ in $n$ variables the optimization problem
\begin{displaymath}
  \begin{aligned}
    \min\quad& u \\
    \st\quad & (1-u)\cdot p(x_1,\dots,x_n) = 0,\\
    & u\in\Z_+,\quad \ve x\in\Z_+^n.
  \end{aligned}
\end{displaymath}
This optimization problem is always feasible and has the optimal solution
value~$0$ if and only if $p(x_1,\dots,x_n)=0$ is solvable, and $1$ otherwise.
Thus, optimizing \emph{linear forms} over one polynomial constraint in 10
non-negative integer variables is incomputable, and similar statements can be
derived from the other universal pairs above.
Jeroslow~\cite{jeroslow-1973:quadratic-ip-uncomputable} used the above program
and a degree reduction (by introducing additional variables) to prove the
following.
\begin{theorem}
  \label{th:quadopt-incomputable}
  The problem of minimizing a linear form over quadratic inequality
  constraints in integer variables is not computable; this still holds true
  for the subclass of problems that are feasible, and where the minimum value
  is either $0$ or $1$.
\end{theorem}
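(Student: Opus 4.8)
The plan is to take Jeroslow's program displayed above and perform a \emph{degree reduction} that turns its single polynomial equality constraint of degree $1+\deg p$ into a system of inequality constraints of degree at most~$2$, while keeping the objective linear and preserving both feasibility and the optimal value in $\{0,1\}$.

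First I would record the starting point. For an arbitrary polynomial $p$ in $n$ variables, Jeroslow's problem
\[
  \min\{\, u : (1-u)\,p(\ve x)=0,\ u\in\Z_+,\ \ve x\in\Z_+^n \,\}
\]
is always feasible (take $u=1$), and its optimal value is $0$ if $p(\ve x)=0$ is solvable over $\Z_+^n$ and $1$ otherwise. Hence an algorithm solving it would decide whether a polynomial has a non-negative integer root, which is impossible by Theorem~\ref{th:polyopt-incomputable}. It therefore suffices to rewrite this problem, instance by instance, in the required quadratic form.

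Next I would carry out the degree reduction in the style of Skolem. Writing the constraint polynomial $q(u,\ve x):=(1-u)\,p(\ve x)$ as a sum of monomials, I would introduce, for each product of two variables needed to build up a monomial, a fresh non-negative integer variable $w$ together with its \emph{defining equation} $w=v_1 v_2$, where $v_1,v_2$ range over the original and the previously introduced variables. Each such defining equation is quadratic, and iterating the substitution expresses every monomial of $q$ as a single auxiliary variable; the equation $q=0$ thereby becomes a \emph{linear} equation in the enlarged variable set. Finally I would replace every equality $e=0$ (the linear one and each quadratic defining one) by the pair of inequalities $e\le 0$ and $-e\le 0$, so that all constraints have degree at most~$2$, and I would keep the objective $\min u$.

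It remains to argue that the transformation is faithful, and this is the step I expect to be the main obstacle. The point is that the auxiliary variables are \emph{forced}: the defining equations determine each $w$ uniquely from the values of the original variables, so the feasible solutions of the quadratic system project bijectively onto those of Jeroslow's program, and the value of $u$—hence the objective—is unchanged. Consequently the reduced problem is still always feasible, still has optimal value in $\{0,1\}$, and still has value~$0$ exactly when $p(\ve x)=0$ is solvable. Computing its optimum would again decide Diophantine solvability, proving incomputability. The statement over free integer variables, rather than over $\Z_+$, then follows by the same construction combined with the variable-splitting and four-squares equivalences discussed earlier.
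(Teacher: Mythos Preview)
Your proposal is correct and follows precisely the approach the paper attributes to Jeroslow: start from the displayed program $\min\{u:(1-u)p(\ve x)=0,\ u\in\Z_+,\ \ve x\in\Z_+^n\}$, then apply a Skolem-style degree reduction by introducing auxiliary product variables to bring all constraints down to degree~$2$. The paper itself gives no more than this one-sentence sketch; you have simply filled in the details faithfully, including the bijectivity of the projection (since each auxiliary variable is forced), the passage from equalities to pairs of inequalities, and the final conversion from $\Z_+$ to $\Z$ via the four-squares encoding already discussed in the paper.
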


This statement can be strengthened by giving a bound on the number of integer
variables.

%%in the following way.  Start with a
%% universal explicit system of quadratic equations, like the one given by Jones
%% in $4+58$ variables, $q_i(x;z,u,y; a_1,\dots,a_\nu)=0$ for $i=1,\dots,m$. 
%% Write down
%% \begin{displaymath}
%%   \begin{aligned}
%%     \min\quad& u \\
%%     \st\quad & (1-u)\cdot q_i(x_1,\dots,x_n) = 0 &&\text{for $i=1,\dots,m$},\\
%%     & u\in\Z_+,\quad \ve x\in\Z_+^n.
%%   \end{aligned}
%% \end{displaymath}
%%%% FIXME: Careful: How many new variables do we need?
%%%% Can also try to write: q_i(x_1,\dots,x_n) + u_i = 0
%%%% and u = \sum_{u_i^2}. 
%%%% How many equations does the quadratic system have?

\section{Hardness and inapproximability}

All incomputability results, of course, no longer apply when finite bounds for
all variables are known; in this case, a trivial enumeration approach gives a
finite algorithm.  This is immediately the case when finite bounds for all
variables are given in the problem formulation, such as for 0-1 integer
problems. 

For other problem classes, even though finite bounds are not given, it is
possible to compute such bounds that either hold for all feasible solutions or
for an optimal solution (if it exists).  This is well-known for the case of
linear constraints, where the usual encoding length estimates of basic
solutions \cite{GroetschelLovaszSchrijver88} are available.  As we explain in
section~\ref{s:convex-min} below, such finite bounds can also be computed for
convex and quasi-convex integer optimization problems.

In other cases, algorithms to decide feasibility exist even though no finite
bounds for the variables are known.  An example is the case of single
Diophantine equations of degree 2, which are decidable using an algorithm by
Siegel \cite{siegel-1972:quadratic-forms}.  We discuss the complexity of this
case below.\smallbreak

Within any such computable subclass, we can ask the question of the
complexity. Below we discuss hardness results that come from the number
theoretic side of the problem~(section \ref{s:hardness-from-diophant}) and
those that come from the continuous optimization side
(section~\ref{s:hardness-from-nonlinear}. 

\subsection{Hardness results from quadratic Diophantine equations in fixed
  dimension}
\label{s:hardness-from-diophant}

The computational complexity of single quadratic Diophantine equations in
2~variables is already very interesting and rich in structure; we refer to to
the excellent paper by
Lagarias~\cite{lagarias-2006:cert-binary-quadratic-diophantine}.  Below we
discuss some of these aspects and their implications on optimization.

Testing primality of a number~$N$ is equivalent to deciding feasibility of the
equation
\begin{equation}\label{eq:primes}
  (x + 2)(y + 2) = N
\end{equation}
over the non-negative integers.  Recently, Agrawal, Kayal, and Saxena
\cite{agrawal-kayal-saxena:primes-p} showed that primality can be tested in
polynomial time.  However, the complexity status of \emph{finding} factors of
a composite number, i.e., finding a solution~$(x,y)$ of~\eqref{eq:primes}, is
still unclear.

The class also contains subclasses of NP-complete feasibility problems, 
such as the problem of deciding for given $\alpha,\beta,\gamma\in\N$ whether
there exist $x_1,x_2\in\Z_+$ with $\alpha x_1^2+\beta x_2 = \gamma$
\cite{manders-adleman:1978}.
On the other hand, the problem of
deciding for given $a,c\in\N$ whether there exist $x_1,x_2\in\Z$ with $ax_1x_2
+ x_2=c$, lies in $\mathrm{NP}\setminus \mathrm{coNP}$ unless
$\mathrm{NP}=\mathrm{coNP}$ \cite{adleman-manders:stoc-1977}.

The feasibility problem of the general class of quadratic Diophantine
equations in two (non-negative) variables was shown by
Lagarias~\cite{lagarias-2006:cert-binary-quadratic-diophantine} to be in NP.
This is not straightforward because minimal solutions can have an encoding
size that is exponential in the input size.  This can be seen in the case of
the so-called \emph{anti-Pellian equation} $x^2 - dy^2 = -1$.  Here Lagarias
\cite{lagarias-1980:complexity} proved that for all $d = 5^{2n+1}$, there
exists a solution, and the
solution with minimal binary encoding length has an encoding length of
$\Omega(5^n)$ (while the input is of encoding length $\Theta(n)$). 
(We remark that the special case of the anti-Pellian equation is in coNP, as
well.) \smallbreak

Related hardness results include the problem of quadratic congruences with a
bound, i.e., deciding for given $a,b,c\in\N$ whether there exists a positive
integer $x<c$ with $x^2\equiv a \pmod{b}$; this is the NP-complete problem AN1
in \cite{GareyJohnson79}.\smallbreak

From these results, we immediately get the following consequences on
optimization. 
\begin{theorem} \label{th:deg4-dim2-hard} 
  \begin{enumerate}[\rm(i)]
  \item The feasibility problem of quadratically constrained problems in $n=2$
    integer variables is NP-complete.
  \item The problems of computing a feasible (or optimal) solution of quadratically
    constrained problems in $n=2$ integer variables is not polynomial-time solvable
    (because the output may require exponential space).
  \item The feasibility problem of quadratically constrained problems in $n>2$
    integer variables is NP-hard (but it is unknown whether it is in NP).
  \item The problem of minimizing a degree-$4$ polynomial over the lattice
    points of a convex polygon (dimension $n=2$) is NP-hard.  
  \item The problem of finding the minimal value of a degree-$4$ polynomial
    over $\Z^2_+$ is NP-hard; writing down an optimal solution cannot be done
    in polynomial time.
  \end{enumerate}
\end{theorem}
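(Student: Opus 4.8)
The plan is to obtain all five parts as reductions from the two number-theoretic facts stated above: the NP-completeness of deciding solvability of $\alpha x_1^2 + \beta x_2 = \gamma$ over $\Z_+$ \cite{manders-adleman:1978}, and the exponential lower bound on the encoding length of the minimal solutions of the anti-Pellian equation $x^2 - dy^2 = -1$ with $d = 5^{2n+1}$ \cite{lagarias-1980:complexity}.

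First, for (i) I would prove NP-hardness by encoding the Manders--Adleman equation as the quadratic constraint system $\alpha x_1^2 + \beta x_2 - \gamma \le 0$, $\gamma - \alpha x_1^2 - \beta x_2 \le 0$, $-x_1 \le 0$, $-x_2 \le 0$ in the two integer variables $x_1,x_2$; this system is feasible precisely when the equation is solvable over $\Z_+$, so feasibility of quadratically constrained problems in dimension $2$ is NP-hard. For membership in NP I would invoke Lagarias's theorem \cite{lagarias-2006:cert-binary-quadratic-diophantine} that feasibility of a binary quadratic Diophantine equation lies in NP, the point being that a feasible point can always be chosen of polynomial encoding length despite the anti-Pellian phenomenon. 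Part (iii) then follows at once: any two-variable instance is also an $n$-variable instance once the remaining coordinates are fixed to zero by linear constraints, so NP-hardness is inherited, while membership in NP is not claimed because Lagarias's certificate bound is special to two variables.

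For (ii) and the second assertion of (v) I would use the anti-Pellian family. Writing $x^2 - dy^2 = -1$ with $d = 5^{2n+1}$ as the two quadratic constraints $x^2 - dy^2 + 1 \le 0$ and $-x^2 + dy^2 - 1 \le 0$ gives, for each $n$, a feasible instance of encoding length $\Theta(n)$ all of whose integer solutions have encoding length $\Omega(5^n)$; hence any algorithm that outputs a feasible (a fortiori optimal) point must write exponentially many bits and cannot run in polynomial time, which is (ii). For the solution-size half of (v), minimizing the degree-$4$ polynomial $(x^2 - dy^2 + 1)^2$ over $\Z_+^2$ has optimal value $0$, attained only at the anti-Pellian solutions (the point $x=0$ would force $dy^2 = 1$, which is impossible), so every optimal solution again has exponential encoding length. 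It remains to establish NP-hardness in (iv) and (v), for which I would minimize the square of the Manders--Adleman form, $f(x_1,x_2) = (\alpha x_1^2 + \beta x_2 - \gamma)^2$, a polynomial of total degree $4$ that attains the value $0$ if and only if the equation is solvable. For (v) this is optimized over $\Z_+^2$; since $f \to \infty$ as $\|x\| \to \infty$ the minimum is attained and equals $0$ exactly for the solvable instances, so computing the optimal value decides an NP-complete problem. For (iv) I would restrict to the convex polygon $0 \le x_1 \le \gamma$, $0 \le x_2 \le \gamma$, whose vertices have binary encoding length polynomial in the input; every non-negative solution satisfies $x_1,x_2 \le \gamma$ and hence lies in this polygon, so the minimum of $f$ over its lattice points is $0$ exactly for the solvable instances, yielding NP-hardness.

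The main obstacle is the membership half of (i). Whereas NP-hardness needs only a single equation recast as two inequalities, asserting that the \emph{whole} class of quadratic inequality systems in two variables lies in NP requires a polynomial bound on the encoding length of some feasible point of an arbitrary such system, not merely of a single equation; the anti-Pellian example shows that such bounds cannot be taken for granted. I would therefore have to argue that Lagarias's certificate construction extends from one equation to a finite conjunction of quadratic inequalities in the plane, reducing the system to a controlled family of binary quadratic equations, and this is the delicate step.
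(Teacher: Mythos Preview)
Your reductions are correct and are precisely the ones the paper has in mind; the paper does not give a formal proof but presents the theorem as an immediate consequence of the Manders--Adleman, Lagarias, and anti-Pellian facts, and your sketch simply spells out those consequences. The constructions for (ii), (iii), (iv), and (v) are exactly right.

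The only place where you make life harder than necessary is the membership half of (i). The intended reading of ``quadratically constrained problem in $n=2$ integer variables'' here is the feasibility of a single quadratic Diophantine equation in two \emph{non-negative} integer variables, which is exactly the class to which both the Manders--Adleman hardness result and Lagarias's NP-membership result apply directly. By recasting the equation as a system of four quadratic inequalities over~$\Z^2$ you create a class that is strictly broader than what Lagarias's certificate bound covers, and then rightly worry about closing that gap. You do not need to: take the problem class to be single quadratic equations over~$\Z_+^2$, cite \cite{manders-adleman:1978} for hardness and \cite{lagarias-2006:cert-binary-quadratic-diophantine} for membership, and (i) is done. Your observation that extending NP-membership to arbitrary systems of quadratic inequalities in two variables would be delicate is correct, but that extension is neither claimed nor needed.
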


However, the complexity of minimizing a quadratic form over the integer points in
polyhedra of fixed dimension is unclear, even in dimension~$n=2$.  
%%% (Letchford asked this question.)
Consider the integer convex minimization problem
\begin{displaymath}
  \begin{aligned}
    \min\quad & \alpha x_1^2 + \beta x_2, \\
    \st\quad& x_1, x_2 \in\Z_+
  \end{aligned}
\end{displaymath}
for $\alpha,\beta\in\N$.  Here an optimal solution
can be obtained efficiently, as we explain in section~\ref{s:convex-min}; in fact, clearly
$x_1=x_2=0$ is the unique optimal solution.  On the other
hand, 
the  problem whether there exists a point $(x_1,x_2)$ of a prescribed
objective value~$\gamma = \alpha x_1^2 + 
\beta x_2$ is NP-complete (see above).  For indefinite quadratic forms, even
in dimension~$2$, nothing seems to be known.

%% Need better reference.
In varying dimension, the convex quadratic maximization case, i.e., maximizing
positive definite quadratic forms is an NP-hard problem.  This is even true in
very restricted settings such as the problem to maximize $\sum_i {(\ve
  w_i^\top \ve x)^2}$ over $\ve x\in\{0,1\}^n$
\cite{onn:nonlinear-discrete-monograph}.  \smallbreak

\subsection{Inapproximability of nonlinear optimization in varying dimension}
\label{s:hardness-from-nonlinear}

Even in the pure continuous case, nonlinear optimization is known to be hard.
Bellare and Rogaway \cite{bellare-rogaway-1993,BellareRogaway1995} proved
the following inapproximability results 
using the theory of interactive proof systems.
\begin{theorem} Assume that $\mathrm P\neq\mathrm{NP}$.
  \begin{enumerate}[\rm (i)]
  \item 
    For any $\epsilon<\frac13$, there does not exist a polynomial-time
    weak $\epsilon$-approximation algorithm for the problem of (continuous)
    quadratic programming over polytopes.
  \item 
    There exists a constant~$\delta>0$ such that the problem of polynomial
    programming over polytopes does not have a polynomial-time weak
    $(1-n^{-\delta})$-approximation algorithm. 
  \end{enumerate}
\end{theorem}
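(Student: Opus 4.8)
The plan is to derive both inapproximability statements from the probabilistically checkable proof (PCP) characterization of $\mathrm{NP}$ that underlies the theory of interactive proofs, by encoding a verifier's acceptance probability as the objective of a polynomial program over the cube. Fix a language $L\in\mathrm{NP}$ and a PCP verifier $V$ that, on an input of length $n$, tosses $r=\Order(\log n)$ random coins and reads $q$ bits of a proof $\pi\in\{0,1\}^m$ with $m$ polynomial in $n$. For each of the polynomially many coin sequences $R$, the accept/reject decision is a Boolean predicate of the $q$ queried bits, and since those bits are distinct it can be written as a \emph{multilinear} polynomial of degree at most $q$ in the corresponding proof variables. Averaging over $R$, the acceptance probability $P(\pi)=2^{-r}\sum_R(\text{predicate}_R)$ is itself a multilinear polynomial of degree at most $q$. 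Relaxing $\pi\in\{0,1\}^m$ to $y\in[0,1]^m$, the feasible region is the cube, a polytope, and the objective is $P(y)$.

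The first key step is that $\max_{y\in[0,1]^m}P(y)$ equals the maximum acceptance probability $\omega(V,x)$ over all proofs. This is where multilinearity is essential: a multilinear function is affine in each variable separately, so its maximum over $[0,1]^m$ is attained at a vertex, i.e.\ at an integral point $\pi\in\{0,1\}^m$; hence the continuous optimum coincides with the combinatorial value $\omega(V,x)$. Now invoke the gap guaranteed by the PCP theorem: with completeness $c$ and soundness $s<c$ we have $f_{\max}=\omega(V,x)=c$ when $x\in L$ and $f_{\max}\le s$ when $x\notin L$. Since $P$ is an average of $0/1$-valued predicates we also have $f_{\min}\ge 0$, so $|f_{\max}-f_{\min}|\le f_{\max}$.

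For part (i) I would take a $2$-query verifier, so that $P$ has degree $2$ and the program is a genuine \emph{quadratic} program over the cube. A weak $\epsilon$-approximation returns $f_{\mathcal A}$ with $f_{\mathcal A}\ge f_{\max}-\epsilon|f_{\max}-f_{\min}|\ge(1-\epsilon)f_{\max}$ in the yes-case and $f_{\mathcal A}\le f_{\max}\le s$ in the no-case; these intervals are separated exactly when $(1-\epsilon)c>s$. Amplifying to a verifier whose parameters give $s=\tfrac23 c$ yields separation for every $\epsilon<\tfrac13$, so a polynomial-time weak $\epsilon$-approximation with $\epsilon<\tfrac13$ would decide $L$ in polynomial time, contradicting $\mathrm P\neq\mathrm{NP}$. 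For part (ii) the goal is a gap of order $n^{\delta}$ rather than a constant, which forces two changes: allow the predicate to read many bits, so that $P$ has high degree and we are doing \emph{polynomial} (not quadratic) programming, and drive the soundness down to $s\le c\,n^{-\delta}$ by gap amplification (repetition, or a PCP of sub-constant soundness). Then the lower bound $f_{\mathcal A}\ge n^{-\delta} f_{\max}$ delivered by a weak $(1-n^{-\delta})$-approximation still separates the two cases, ruling such a scheme out.

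The conceptual reduction above is clean; the delicate part, and the main obstacle, is quantitative. One must supply a PCP with \emph{simultaneously} the right query count (which fixes the degree of the program), the right completeness and soundness (to hit the threshold $\epsilon<\tfrac13$ in part (i) and the rate $n^{-\delta}$ in part (ii)), and a controlled value of $f_{\min}$ so that the estimate $|f_{\max}-f_{\min}|\le f_{\max}$ is not too lossy. Extracting the exact constant $\tfrac13$ in particular requires the sharp forms of the PCP theorem controlling free-bit and amortized query complexity, rather than the bare statement $\mathrm{NP}=\mathrm{PCP}(\Order(\log n),\Order(1))$, and this calibration of parameters is the technical heart of the argument.
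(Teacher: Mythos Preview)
The paper does not supply its own proof of this theorem; it is quoted as a result of Bellare and Rogaway, with only the remark that it was obtained ``using the theory of interactive proof systems.''  Your sketch is faithful to that methodology: encode the verifier's acceptance probability as a multilinear polynomial in the proof bits, use multilinearity to argue that the continuous optimum over the cube is attained at a vertex (so the relaxed optimum equals the combinatorial acceptance probability), and then separate yes- and no-instances via the completeness/soundness gap.  This is the Bellare--Rogaway reduction, so there is nothing to contrast.

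One step, however, should be tightened.  For part~(i) you need a degree-$2$ objective, which forces $q=2$ queries, yet you propose to reach $s=\tfrac23\,c$ by ``amplifying.''  Naive repetition multiplies the number of queried bits and hence the degree of the acceptance polynomial, so once you have committed to a quadratic objective, amplification in this form is no longer available.  What is actually needed is a two-prover one-round (equivalently, two-query) system that \emph{already} has the required ratio $s/c<\tfrac23$; the existence of such a system is precisely the sharp PCP input you allude to in your final paragraph, and it does not follow from the bare statement $\mathrm{NP}=\mathrm{PCP}(\Order(\log n),\Order(1))$.  With that correction your outline is sound; the remaining work is, as you say, entirely in calibrating the PCP parameters.
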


Here the number $1-n^{-\delta}$ becomes arbitrarily close to~$0$ for
growing~$n$; note that a weak $0$-approximation algorithm is one that gives no guarantee
other than returning a feasible solution.

Inapproximability still holds for the special case of minimizing a quadratic
form over the cube $[-1,1]^n$ or over the standard simplex. 
In the case of the cube, inapproximability of the max-cut problem is used.
In the case of the standard simplex, it follows via the celebrated
Motzkin--Straus theorem \cite{motzkin-straus} from the inapproximability 
of the maximum stable set problem.  These are results by
H\aa{}stad~\cite{Hastad:inapprox97}; see also \cite{deKlerk2008}.

%% \section{Polynomial reductions}

%% \subsection{Reduction of general 0-1 polynomial programming to 0-1 quadratic programming}

%% In 0-1 polynomial programming, one considers the problem
%% \begin{displaymath}
%%   \max f(\ve x) : g_i(\ve x)\leq 0, \quad \ve x\in\{0,1\}^n
%% \end{displaymath}

\section{Approximation schemes}

For important classes of optimization problems, while exact optimization is
hard, good approximations can still be obtained efficiently.

%% \subsection{Continuous polynomial optimization with fixed degree over simplices: A weak PTAS}

Many such examples are known in combinatorial settings.  As an example in
continuous optimization, we refer to the problem of maximizing homogeneous
polynomial functions of fixed degree over simplices.  Here de Klerk et
al.~\cite{deklerk-laurent-parillo:ptas-polynomial-simplex} proved a weak PTAS.
\smallbreak

Below we present a general result for mixed-integer polynomial optimization
over polytopes.

\subsection{Mixed-integer polynomial optimization in fixed dimension over
  linear constraints:  FPTAS and weak FPTAS} 
\label{s:fptas}
%% MATTHIAS

Here we consider the problem
\begin{equation}
\begin{aligned}
  \hbox{max/min}\quad & f(x_1,\dots,x_n)\\
  \hbox{subject to} \quad & A \mathbf x \leq \mathbf b\\
  & \mathbf x\in \R^{n_1}  \times \Z^{n_2},
\end{aligned} \label{eq:nonlinear-over-polyhedron}
\end{equation}
where $A$ is a rational matrix and $\mathbf b$ is a rational vector.  As we
pointed out above (Theorem \ref{th:deg4-dim2-hard}), optimizing degree-4
polynomials over problems with two integer variables ($n_1=0$, $n_2=2$) is
already a hard problem.  Thus, even when we fix the dimension, we cannot get a
polynomial-time algorithm for solving the optimization problem.  The best we
can hope for, even when the number of both the continuous and the integer
variables is fixed, is an approximation result.

%% \begin{theorem}[Fully polynomial-time approximation schemes]
%%   \label{th:mipo-fptas}
%%   Let the dimension $n=n_1+n_2$ be fixed.
%%   \begin{enumerate}[\rm(a)]
%%   \item There exists a fully polynomial time approximation scheme ({\small
%%       FPTAS}) for the maximization problem~\eqref{eq:mixed-problem} for all
%%     polynomial functions $f(x_1,\dots,x_{n_1},\allowbreak z_1,\dots,z_{n_2})$
%%     with rational coefficients that are non-negative on the feasible region.
%%   \item Let $f$ be an arbitrary polynomial function with rational coefficients
%%     and maximum total degree~$D$, and let $P\subset \R^n$ be a rational convex
%%     polytope.  In time polynomial in the input size, $D$, and $\frac1\epsilon$
%%     it is possible to compute a solution $(\mathbf x_\epsilon, \mathbf z_\epsilon) \in
%%     P\cap\bigl(\R^{n_1}\times \Z^{n_2}\bigr)$ with
%%     \begin{displaymath}
%%       \bigl|f(\mathbf
%%       x_\epsilon, \mathbf z_\epsilon) - f(\mathbf x_{\max}, \mathbf z_{\max})\bigr|
%%       \leq \epsilon \bigl| f(\mathbf x_{\max},\mathbf z_{\max}) - f(\mathbf x_{\min}, \mathbf z_{\min}) \bigr|.
%%     \end{displaymath}
%%   \end{enumerate}
%% \end{theorem}
We present here the FPTAS obtained by De Loera et
al. \cite{deloera-hemmecke-koeppe-weismantel:intpoly-fixeddim,DeloeraHemmeckeKoeppeWeismantel06,deloera-hemmecke-koeppe-weismantel:mixedintpoly-fixeddim-fullpaper},
which uses the ``summation method'' and the theory of \emph{short rational
  generating functions} pioneered by Barvinok \cite{Barvinok94,BarviPom}.  We
review the methods below; the FPTAS itself appears in Section \ref{s:fptas}.
An open question is briefly discussed at the end of this section.

\subsubsection{The summation method}

The summation method for optimization is the idea to use of elementary relation
\begin{equation}
  \max\{s_1, \dots, s_N \} = \lim_{k \rightarrow \infty} \sqrt[k]{s_1^k +
    \dots + s_N^k},
\end{equation}
which holds for any finite set $S=\{s_1,\dots,s_N\}$ of non-negative real
numbers.  This relation can be viewed as an approximation result for
$\ell_k$-norms.  Now if $P$ is a polytope and $f$ is an objective function
non-negative on $P\cap\Z^d$, let $\mathbf x^1,\dots,\mathbf x^{N}$ denote all the feasible
integer solutions in~$P\cap\Z^d$ and collect their objective function values
$s_i=f(\mathbf x^i)$
in a vector~$\mathbf s\in\Q^N$.  Then, comparing the unit balls of the $\ell_k$-norm
and the $\ell_\infty$-norm (Figure~\ref{fig:lp-norms}), we get the relation
\begin{displaymath}
  L_k :=
  %% {{N}}^{-1/k} \sqrt[k]{f^k(\mathbf x^1) + \dots + f^k(\mathbf x^N)} =
  {{N}}^{-1/k} \mathopen\| \mathbf s \mathclose\|_k \leq {\mathopen\| \mathbf s \mathclose\|_\infty}
  \leq \mathopen\| \mathbf s\mathclose\|_k
  %%= \sqrt[k]{f^k(\mathbf x^1) + \dots + f^k(\mathbf x^N)}
  =: U_k.
\end{displaymath}%
\begin{figure}[t]
  \centering
  $k=1$
  \ifpdf
    \input{l1-norm.pdf_t}
    \else
    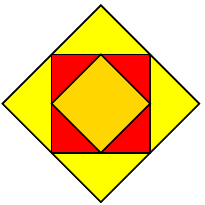
    \fi
  \qquad
  $k=2$
  \ifpdf
    \input{l2-norm.pdf_t}
    \else
    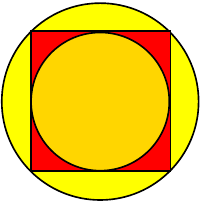
    \fi
  \caption{Approximation properties of $\ell_k$-norms}
  \label{fig:lp-norms}
\end{figure}%
These estimates are independent of the function~$f$.  (Different estimates
that make use of the properties of~$f$,
and that are suitable also for the continuous case, can be obtained from the H\"older inequality;
see for instance \cite{baldoni-berline-deloera-koeppe-vergne:integration}.)

Thus, for obtaining a good approximation of the maximum, it suffices to solve
a summation problem of the polynomial function $h=f^k$ on $P\cap\Z^d$ for a
value of~$k$ that is large enough. Indeed, for $k= \bigl\lceil{(1+1/\epsilon)\log
  {{N}}}\bigr\rceil$, we obtain $U_k - L_k\leq \epsilon f(\mathbf x^{\max})$.  On the other
hand, this choice of~$k$ is polynomial in the input size (because $1/\epsilon$
is encoded in unary in the input, and $\log N$ is bounded by a polynomial in
the binary encoding size of the polytope~$P$).  Hence, when the dimension~$d$ is
fixed, we can expand the polynomial function~$f^k$ as a list of
monomials in polynomial time.

\subsubsection{Rational generating functions}

To solve the summation problem, one uses the technology of short rational
generating functions.  We explain the theory on a simple, one-dimensional example.  Let us consider the
set~$S$ of integers in the interval $P=[0,\dots,n]$. %% ; see
%% the top of Figure \ref{fig:1d-brion}\,(a).
%%\autoref{fig:1d-example}.
%% \begin{figure}[t]
%%   \centering
%%   \inputfig{1d-example-0}
%%   \caption{A one-dimensional lattice-point set}
%%   \label{fig:1d-example}
%% \end{figure}
We associate with~$S$ the polynomial
\begin{math}
%%  \label{eq:1d-example-formula}
  g(P; z) = z^0 + z^1 + \dots + z^{n-1} + z^n;
\end{math}
i.e., every integer $\alpha\in S$ corresponds to a monomial~$z^\alpha$
with coefficient~$1$ in the polynomial~$g(P; z)$.
This polynomial is called the \emph{generating function} of~$S$ (or of~$P$).
From the viewpoint of computational complexity, this generating function is of
exponential size (in the encoding length of~$n$), just as an explicit list of
all the integers~$0$, $1$, \dots, $n-1$, $n$ would be.  However, we can observe that
$g(P; z)$ %%\eqref{eq:1d-example-formula}
is a finite geometric series, so there exists
a simple summation formula that expresses
it %%~\eqref{eq:1d-example-formula}
in a much more compact way:
\begin{equation}
  \label{eq:1d-example-formula-summed}
  g(P; z) = z^0 + z^1 + \dots + z^{n-1} + z^n
  = \frac{1-z^{n+1}}{1-z}.
\end{equation}
The ``long'' polynomial has a ``short'' representation as a rational function.
%% (Of course, we need to be careful about the point $z=1$, which is a removable
%% singularity in the summation formula.)
The encoding length of this new formula is \emph{linear} in the encoding
length of~$n$.
On the basis of this idea, we can solve the summation problem.  Consider the generating
function of the interval~$P=[0,4]$,
\begin{displaymath}
  g(P;z) = z^0 + z^1 + z^2 + z^3 + z^4
  {= \frac1{1-z} - \frac{z^5}{1-z}}.
\end{displaymath}
We now apply the differential operator $ z \frac{\mathrm d}{\mathrm d
    z} $ and obtain
\begin{displaymath}
  \left(z \frac{\mathrm d}{\mathrm d z}\right) g(P;z)
  = {1} z^1 + {2} z^2 + {3} z^3 + {4} z^4
  {= \frac1{(1-z)^2} - \frac{-4z^5 + 5z^4}{(1-z)^2}}
\end{displaymath}
Applying the same differential operator again, we obtain
\begin{multline*}
  \left(z \frac{\mathrm d}{\mathrm d z}\right) \left( z
    \frac{\mathrm d}{\mathrm d z} \right) g(P;z)
  = {1} z^1 + {4} z^2 + {9}
  z^3 + {16} z^4\\
  {= \frac{z + z^2}{(1-z)^{3}}
    - \frac{25z^5 -39z^6+16 z^7}{(1-z)^{3}}}
\end{multline*}
We have thus evaluated the monomial function $h(\alpha)=\alpha^2$ for
$\alpha=0,\dots,4$; the results appear as the coefficients of the
respective monomials.  Substituting $z=1$ yields the desired sum
\begin{displaymath}
  \left.\left(z \frac{\mathrm d}{\mathrm d z}\right) \left( z
    \frac{\mathrm d}{\mathrm d z} \right) g(P;z) \right|_{z=1}
  = {1} + {4} + {9} + {16} = 30
\end{displaymath}
The idea now is to evaluate this sum instead by computing the limit of the
rational function for $z\to 1$,
\begin{displaymath}
  \sum_{\alpha=0}^4 \alpha^2 = \lim_{z\to1} \left[\frac{z + z^2}{(1-z)^{3}}
    - \frac{25z^5 -39z^6+16 z^7}{(1-z)^{3}}\right];
\end{displaymath}
this can be done using residue techniques.\smallbreak

We now present the general definitions and results.  Let $P\subseteq\R^d$ be a
rational polyhedron.  We first define its \emph{generating function} as the
\emph{formal Laurent series} $\tilde g(P; \mathbf z) = \sum_{{\bm{\alpha}}\in
  P\cap\Z^d} \mathbf z^{{\bm{\alpha}}} \in \Z[[z_1,\dots,z_d,
z_1^{-1},\dots,z_d^{-1}]]$, i.e., without any consideration of convergence
properties.  By convergence, one moves to a \emph{rational generating
  function} $g(P;\mathbf z)\in\Q(z_1,\dots,z_d)$.

%% All of this can be made to work for polyhedra in arbitrary dimensions.  
The following breakthrough result was obtained by Barvinok in 1994.
\begin{theorem}[Barvinok \cite{Barvinok94}]
  Let $d$ be fixed.  There exists a polynomial-time algorithm for computing
  the generating function $g(P;\mathbf{z})$ of a polyhedron $P\subseteq\R^d$ given by rational
  inequalities in the form of a rational function
  \begin{equation}
    \label{eq:generating-function-0}
    g(P; \mathbf z) = \sum_{i\in I} \epsilon_i \frac{\mathbf
      z^{\mathbf a_i}}{\prod_{j=1}^d (1-\mathbf z^{\mathbf b_{ij}})}
    \quad\text{with $\epsilon_i\in\{\pm1\}$, $\mathbf a_i\in\Z^d$, and
  $\mathbf b_{ij}\in\Z^d$.}
  \end{equation}
\end{theorem}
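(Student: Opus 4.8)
The plan is to follow Barvinok's original strategy, which rests on three reductions: from a general polyhedron to its vertex cones via Brion's theorem, from pointed cones to simplicial cones via triangulation, and---the crux---from simplicial cones to \emph{unimodular} cones via a signed decomposition that terminates quickly. Before any of this, I would dispose of the convergence issues by interpreting $g(P;\mathbf z)$ not as a convergent series but as the image of $P$ under a valuation on the algebra of rational polyhedra, so that the rational-function identities below hold as identities of well-defined objects. The structural fact driving the first reduction is Brion's theorem: under this valuation, $\tilde g(P;\mathbf z) = \sum_{v} \tilde g(K_v; \mathbf z)$, where $v$ ranges over the vertices of $P$ and $K_v$ is the supporting (tangent) cone at $v$. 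A companion fact I would use repeatedly is that any polyhedron containing a line maps to the zero rational function; this is exactly what makes the valuation well-defined and lets the lower-dimensional artifacts introduced below cancel. Since $d$ is fixed, the number of vertices is polynomially bounded, so it suffices to compute $g(K_v;\mathbf z)$ for a single pointed cone.

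Next I would triangulate each pointed cone $K_v$ into simplicial cones $C_i$. In fixed dimension this triangulation has polynomially many pieces and is computable in polynomial time. The overlaps between the $C_i$ are lower-dimensional cones; rather than track them by inclusion--exclusion, I would invoke a polarization argument (or pass to a generic linear form) so that, at the level of the valuation, $\tilde g(K_v;\mathbf z) = \sum_i \tilde g(C_i;\mathbf z)$ holds exactly, the boundary contributions vanishing because every cone with nontrivial lineality maps to zero.

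The heart of the argument---and the step I expect to be the main obstacle---is the signed decomposition of a simplicial cone $C = \operatorname{cone}(u_1,\dots,u_d)$, with $u_j$ primitive lattice vectors, into unimodular cones. Let $\Delta = |\det(u_1,\dots,u_d)|$ be the index of the sublattice they generate. Using Minkowski's convex body theorem applied to the symmetric box $\{\sum_j t_j u_j : |t_j| \le \Delta^{-1/d}\}$ (whose volume is $2^d$), there exists a nonzero lattice vector $w = \sum_j \alpha_j u_j$ with $|\alpha_j| \le \Delta^{-1/d}$, and in fixed dimension such a $w$ can be found in polynomial time via lattice basis reduction and bounded enumeration. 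Replacing one generator at a time by $w$ expresses $C$ as a signed sum of at most $d$ simplicial cones, each of index $|\alpha_j|\Delta \le \Delta^{(d-1)/d}$, with the inevitable lower-dimensional error cones again mapping to zero. Iterating, the index drops from $\Delta$ to $\Delta^{((d-1)/d)^k}$ after $k$ rounds, so $O(\log\log\Delta)$ rounds suffice to reach unimodular cones. For fixed $d$ the recursion tree has branching at most $d$ and depth $O(\log\log\Delta)$, hence $d^{O(\log\log\Delta)} = (\log\Delta)^{O(1)}$ leaves, which is polynomial in the input size. The real content lies in proving the index-reduction bound $\Delta^{(d-1)/d}$ and in verifying that the signed identities hold exactly modulo the valuation.

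Finally, a unimodular simplicial cone $\operatorname{cone}(b_1,\dots,b_d)$ with $b_1,\dots,b_d$ a lattice basis, shifted to its apex lattice point $\mathbf a$, has the closed form $\mathbf z^{\mathbf a}/\prod_{j=1}^d (1-\mathbf z^{b_j})$, a $d$-fold geometric series; this is precisely the shape of each summand in \eqref{eq:generating-function-0}. Assembling the contributions of all unimodular cones over all vertices, with the signs $\epsilon_i$ recorded along the decomposition, yields $g(P;\mathbf z)$ in the required form, and the entire computation runs in polynomial time for fixed $d$.
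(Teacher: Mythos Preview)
The paper does not prove this theorem; it is stated as a cited result from Barvinok~\cite{Barvinok94} and then used as a black box in the subsequent development of the FPTAS. So there is no ``paper's own proof'' to compare against.

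That said, your sketch is a faithful and essentially correct outline of Barvinok's original argument: Brion's theorem to localize at vertex cones, triangulation into simplicial cones, the Minkowski-based signed decomposition driving the index from $\Delta$ down to $\Delta^{(d-1)/d}$ at each step, and the resulting $d^{O(\log\log\Delta)} = (\log\Delta)^{O(1)}$ bound on the number of unimodular pieces. The use of the valuation viewpoint to make the lower-dimensional overlaps vanish is the modern clean way to handle the bookkeeping (due to Lawrence and Khovanskii--Pukhlikov; cf.\ Barvinok--Pommersheim). One small imprecision: the apex of a vertex cone $K_v$ is the rational vertex $v$, which need not be a lattice point, so in the final step the exponent $\mathbf a$ is not ``the apex lattice point'' but rather the unique lattice point in the half-open fundamental parallelepiped $v + \{\sum_j t_j \mathbf b_j : 0 \le t_j < 1\}$; this is straightforward to compute once the cone is unimodular.
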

%%% FIXME: Add note on implementations.
%% Implementations and new variants of Barvinok's algorithm have been decribed in
%% \cite{DeLoeraetal03}.  Variants using primal irrational
%% decompositions \cite{koeppe:irrational-barvinok},
%% using primal half-open decompositions \cite{koeppe-verdoolaege:parametric},
%% which improve the practical performance.  A
%% state-of-the-art implementation of Barvinok's algorithm and its variants is
%% contained in the PI's software \texttt{LattE macchiato}
%% \cite{latte-macchiato}.

\subsubsection{Efficient summation using rational generating functions}
\label{s:fptas}
Below we describe the theorems on the summation
method based on short rational generating functions, which appeared in \cite{deloera-hemmecke-koeppe-weismantel:intpoly-fixeddim,DeloeraHemmeckeKoeppeWeismantel06,deloera-hemmecke-koeppe-weismantel:mixedintpoly-fixeddim-fullpaper}.   Let $g(P;\mathbf z)$ be
the rational generating function of~$P\cap\Z^d$, computed using Barvinok's
algorithm.  By symbolically applying differential operators to~$g(P;\mathbf
z)$, we can compute a short rational function representation of the Laurent
polynomial
\begin{math}
  g(P,h; \mathbf z) = \sum_{{\bm{\alpha}}\in P \cap \Z^d} h({\bm{\alpha}}) \mathbf z^{{\bm{\alpha}}},
\end{math}
where each monomial $\mathbf z^{{\bm{\alpha}}}$ corresponding to an integer
point ${\bm{\alpha}}\in P\cap\Z^d$ has a coefficient that is the
value~$h({\bm{\alpha}})$.
As in the one-dimensional example above, we use the partial
differential operators $z_i \frac\partial{\partial z_i}$ for $i =1,\dots,d$ on
the short rational generating function.  
In fixed dimension, the size of the rational function expressions occuring in the symbolic
calculation can be bounded polynomially.  Thus one obtains the following result.

\begin{theorem}[\cite{deloera-hemmecke-koeppe-weismantel:intpoly-fixeddim}, Lemma 3.1]
  \label{operators}
  \begin{enumerate}[\rm(a)]
  \item Let $h(x_1,\dots,x_d) = \sum_{{\bm{\beta}}}c_{\bm{\beta}} \mathbf x^{{\bm{\beta}}}
    \in\Q[x_1,\dots,x_d]$ be a polynomial.  Define the differential operator
    $$D_h = h\left(z_1\frac{\partial}{\partial z_1},\dots, z_d\frac{\partial}{\partial
        z_d}\right) = \sum_{{\bm{\beta}}} c_{{\bm{\beta}}}
    \left(z_1\frac{\partial}{\partial z_1}\right)^{\beta_1}\dots
    \left(z_d\frac{\partial}{\partial z_d}\right)^{\beta_d}.$$
    Then $D_h$ maps the generating function $g(P;\mathbf z) = \sum_{{\bm{\alpha}}\in P\cap\Z^d}
    \mathbf z^{{\bm{\alpha}}}$ to the weighted generating function
    $(D_h g)(\mathbf z) = g(P, h; \mathbf z) = \sum_{{\bm{\alpha}}\in P\cap\Z^d} h({\bm{\alpha}}) \mathbf z^{{\bm{\alpha}}}$.
  \item
    Let the dimension $d$ be fixed.  Let $g(P;\mathbf z)$ be the Barvinok representation of the
    generating function $\sum_{{\bm{\alpha}}\in P
      \cap \Z^d}\mathbf z^{{\bm{\alpha}}}$ of $P\cap\Z^d$. Let $h\in\Q[x_1,\dots,x_d]$ be a
    polynomial, given as a list of monomials with rational coefficients~$c_{{\bm{\beta}}}$
    encoded in binary and exponents~${\bm{\beta}}$ encoded in unary. We can compute in
    polynomial time a Barvinok representation $g(P,h;\mathbf z)$ for the
    weighted generating function $\sum_{{\bm{\alpha}}\in P \cap \Z^d}
    h({\bm{\alpha}}) \mathbf z^{{\bm{\alpha}}}.$
  \end{enumerate}
\end{theorem}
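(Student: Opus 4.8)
The plan is to prove part~(a) first by a direct computation on the formal series, and then obtain part~(b) by controlling the encoding size of the intermediate rational-function expressions produced by applying the operator $D_h$ to a single Barvinok term.

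For part~(a), I would begin with the key observation that the operator $z_i\frac{\partial}{\partial z_i}$ acts diagonally on monomials: for any exponent vector ${\bm{\alpha}}\in\Z^d$ we have $z_i\frac{\partial}{\partial z_i}\mathbf z^{{\bm{\alpha}}} = \alpha_i\,\mathbf z^{{\bm{\alpha}}}$. Iterating, $\bigl(z_i\frac{\partial}{\partial z_i}\bigr)^{\beta_i}\mathbf z^{{\bm{\alpha}}} = \alpha_i^{\beta_i}\mathbf z^{{\bm{\alpha}}}$, so the full product of operators sends $\mathbf z^{{\bm{\alpha}}}$ to $\bigl(\prod_i \alpha_i^{\beta_i}\bigr)\mathbf z^{{\bm{\alpha}}} = {\bm{\alpha}}^{{\bm{\beta}}}\mathbf z^{{\bm{\alpha}}}$. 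Summing against the coefficients $c_{{\bm{\beta}}}$ then yields $D_h\,\mathbf z^{{\bm{\alpha}}} = h({\bm{\alpha}})\,\mathbf z^{{\bm{\alpha}}}$. Applying this term-by-term to $\tilde g(P;\mathbf z) = \sum_{{\bm{\alpha}}\in P\cap\Z^d}\mathbf z^{{\bm{\alpha}}}$, which is legitimate since $D_h$ acts monomial-wise on the formal Laurent series, gives $(D_h g)(\mathbf z) = \sum_{{\bm{\alpha}}\in P\cap\Z^d} h({\bm{\alpha}})\,\mathbf z^{{\bm{\alpha}}} = g(P,h;\mathbf z)$, as claimed. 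The only subtlety is that the identity is stated at the level of the rational function $g(P;\mathbf z)$, whereas the operator is defined combinatorially on monomials; I would note that $D_h$ is well-defined on $\Q(z_1,\dots,z_d)$ by the ordinary quotient and product rules of differentiation, and that the equality of rational functions follows because both sides agree as formal Laurent series on the common domain of convergence where $g(P;\mathbf z)$ represents $\tilde g(P;\mathbf z)$.

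For part~(b), the task is algorithmic: starting from the Barvinok form \eqref{eq:generating-function-0}, I would apply $D_h$ to each of the $|I|$ summands $\epsilon_i\,\mathbf z^{\mathbf a_i}\big/\prod_{j=1}^d(1-\mathbf z^{\mathbf b_{ij}})$ separately. Each elementary operator $z_i\frac{\partial}{\partial z_i}$, applied to such a term, again produces a rational function whose denominator is a product of powers of the same factors $(1-\mathbf z^{\mathbf b_{ij}})$, with the numerator a Laurent polynomial whose support and coefficients grow in a controlled way. The central estimate is that after applying the degree-$\|{\bm{\beta}}\|_1$ operator $\bigl(z_1\frac{\partial}{\partial z_1}\bigr)^{\beta_1}\cdots\bigl(z_d\frac{\partial}{\partial z_d}\bigr)^{\beta_d}$, the denominator is $\prod_{j=1}^d(1-\mathbf z^{\mathbf b_{ij}})^{1+\beta_j}$ and the numerator has polynomially many monomials in fixed dimension $d$; here it is essential that the exponents ${\bm{\beta}}$ are encoded in unary, so that the number of applications of the elementary operators is polynomial in the input size rather than exponential. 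Summing over the polynomial-degree monomials of $h$ and over the index set $I$, and clearing to a common Barvinok-type representation, yields the desired short rational function for $g(P,h;\mathbf z)$ in polynomial time.

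The main obstacle is the bookkeeping in part~(b): one must show that neither the number of monomials in the numerators nor the bit-sizes of their coefficients blow up as the differential operators are applied repeatedly. I expect the cleanest route is to prove, by induction on the total operator degree, an explicit polynomial bound (in fixed $d$) on the number of numerator monomials and on the degree of each denominator factor, exploiting that in fixed dimension a single application of $z_i\frac{\partial}{\partial z_i}$ to a term with denominator $\prod_j(1-\mathbf z^{\mathbf b_{ij}})^{m_j}$ increases one denominator exponent by one and enlarges the numerator support only by a constant factor depending on $d$. The unary encoding of the exponents and the fixedness of $d$ are exactly what keep this induction polynomial, and verifying those bounds carefully is where the real work lies.
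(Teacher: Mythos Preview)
Your proposal is correct and follows the approach the paper sketches. The paper itself does not give a proof; it cites the result from \cite{deloera-hemmecke-koeppe-weismantel:intpoly-fixeddim} and offers only the one-dimensional worked example together with the remark that ``in fixed dimension, the size of the rational function expressions occurring in the symbolic calculation can be bounded polynomially.'' Your write-up is precisely the fleshing-out of that remark: part~(a) via the diagonal action $z_i\tfrac{\partial}{\partial z_i}\,\mathbf z^{\bm{\alpha}}=\alpha_i\mathbf z^{\bm{\alpha}}$, and part~(b) via a term-by-term application to the Barvinok summands with an inductive size bound.

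One small correction in part~(b): the index $j$ in the denominator factors $(1-\mathbf z^{\mathbf b_{ij}})$ runs over the $d$ generators of the $i$-th Barvinok cone and is unrelated to the coordinate index of the exponent vector~${\bm{\beta}}$. After applying an operator of total degree $\|{\bm{\beta}}\|_1=\sum_k\beta_k$, each denominator factor appears to power at most $1+\|{\bm{\beta}}\|_1$, not $1+\beta_j$. This does not affect your conclusion---the bound is still polynomial in the unary-encoded total degree and in fixed~$d$---but the formula as written conflates two distinct indexings.
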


Thus, we can implement the following algorithm in polynomial time (in fixed dimension).

\begin{algorithm}[Computation of bounds for the optimal value]~\smallskip
\label{Algorithm}

\noindent {\em Input:} A rational convex polytope $P \subset \R^d$;
a polynomial objective function $f \in \Q[x_1,\dots,x_d]$ %%of maximum total degree $D$
that is non-negative over $P\cap\Z^d$,
given as a list of monomials with rational coefficients~$c_{{\bm{\beta}}}$
encoded in binary and exponents~${\bm{\beta}}$ encoded in unary;
an index~$k$, encoded in unary.\smallskip

\noindent {\em Output:} A lower bound~$L_k$ and an upper bound~$U_k$ for the maximal
function value $f^*$ of $f$ over $P\cap\Z^d$.
The bounds $L_k$ form a nondecreasing, the bounds $U_k$ a nonincreasing
sequence of bounds that both reach~$f^*$ in a finite number of steps.

\begin{enumerate}[\rm1.]
\item  Compute a short rational function expression for
  the generating function $g(P;\mathbf z)=\sum_{{\bm{\alpha}}\in P\cap\Z^d} \mathbf
  z^{{\bm{\alpha}}}$.  Using residue techniques, compute $|P \cap
  \Z^d|=g(P;\mathbf 1)$ from $g(P;\mathbf z)$.

\item Compute the polynomial~$f^k$ from~$f$.

\item From the rational function $ g(P;\mathbf z)$
  compute the rational function representation of $g(P,f^k;\mathbf z)$ of
  $\sum_{{\bm{\alpha}}\in P \cap \Z^d} f^k({\bm{\alpha}}) \mathbf z^{\bm{\alpha}}$ by
  Theorem \ref{operators}. Using residue techniques, compute
\[
L_k:=\left\lceil{\sqrt[k]{g(P,f^k;\mathbf 1)/g(P;\ve1)}}\,\right\rceil\;\;\;\text{and}\;\;\;
U_k:=\left\lfloor{\sqrt[k]{g(P,f^k;\mathbf 1)}}\right\rfloor.
\]
\end{enumerate}
\end{algorithm}
From the discussion of the convergence of the bounds, one then
obtains the following result.

\begin{theorem}[Fully polynomial-time approximation scheme]%%[\cite{deloera-hemmecke-koeppe-weismantel:intpoly-fixeddim}, Lemma 3.3 and Theorem 1.1]\label{lemma:bounds}
  Let the dimension $d$ be fixed.  Let $P\subset\R^d$ be a rational convex polytope.
  Let $f$ be a polynomial with rational coefficients that is non-negative on
  $P\cap\Z^d$, given as a list of monomials with rational coefficients~$c_{{\bm{\beta}}}$
  encoded in binary and exponents~${\bm{\beta}}$ encoded in unary.
  \begin{enumerate}[\rm(i)]
\item Algorithm~\ref{Algorithm} computes the bounds $L_k$, $U_k$ in time polynomial in
  $k$, the input size of $P$ and $f$, and the total degree~$D$. The bounds
  satisfy 
$
U_k-L_k \leq f^* \cdot \left(\sqrt[k]{|P \cap \Z^d|}-1 \right).
$
\item For $k=(1+1/\epsilon)\log({|P \cap \Z^d|})$ (a number bounded by a
  polynomial in the input size),
  $L_k$ is a $(1-\epsilon)$-approximation to the optimal value $f^*$ and it
  can be computed in time polynomial in the input size, the total
  degree~$D$, and $1/\epsilon$. Similarly, $U_k$ gives a
  $(1+\epsilon)$-approximation to $f^*$.

\item With the same complexity,  by iterated bisection of $P$, we can also find
  a feasible solution $\mathbf x_\epsilon\in P\cap\Z^d$ with
  \begin{math}
    \bigl|f(\mathbf x_\epsilon) - f^*\bigr| \leq \epsilon f^*.
  \end{math}
\end{enumerate}
\end{theorem}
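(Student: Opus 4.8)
The plan is to prove the three parts in turn, with the bulk of the work residing in the convergence analysis and in the bisection argument for part~(iii). For part~(i) I would first justify the running time by assembling the cited building blocks. Barvinok's theorem yields $g(P;\mathbf z)$ in polynomial time in fixed dimension, and its evaluation at $\mathbf z=\mathbf 1$ by residues gives $N:=|P\cap\Z^d|$. Expanding $f^k$ produces a polynomial of total degree $kD$ with at most $\binom{kD+d}{d}=\Order((kD)^d)$ monomials, which for fixed $d$ is polynomial in $k$ and $D$; Theorem~\ref{operators}(b) then produces $g(P,f^k;\mathbf z)$, and hence $g(P,f^k;\mathbf 1)$, in polynomial time. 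The analytic heart of part~(i) is the norm estimate: collecting the values $s_i=f(\mathbf x^i)\ge 0$ over the $N$ lattice points of $P$ into a vector $\mathbf s$, one has $g(P,f^k;\mathbf 1)=\|\mathbf s\|_k^k$ and $f^*=\|\mathbf s\|_\infty$, so the inequalities $N^{-1/k}\|\mathbf s\|_k\le\|\mathbf s\|_\infty\le\|\mathbf s\|_k$ identify $\tilde L_k:=\sqrt[k]{g(P,f^k;\mathbf 1)/N}$ and $\tilde U_k:=\sqrt[k]{g(P,f^k;\mathbf 1)}$ as a lower and an upper bound for $f^*$. From the left inequality $\|\mathbf s\|_k\le N^{1/k}f^*$, subtracting gives $\tilde U_k-\tilde L_k=\|\mathbf s\|_k\,(1-N^{-1/k})\le f^*(\sqrt[k]{N}-1)$. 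The floor and ceiling in Algorithm~\ref{Algorithm} only tighten these and, under the standing convention that $f$ is integer-valued on $P\cap\Z^d$, keep them bracketing $f^*$, yielding both the stated gap bound and finite termination.

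For part~(ii) I would substitute $k=(1+1/\epsilon)\log N$ (natural logarithm) into the estimate of part~(i). The key elementary fact is the inequality $\tfrac{\epsilon}{1+\epsilon}\le\log(1+\epsilon)$, which rearranges to $N^{1/k}=\exp\bigl(\tfrac{\epsilon}{1+\epsilon}\bigr)\le 1+\epsilon$, hence $\sqrt[k]{N}-1\le\epsilon$. Combined with $\tilde L_k\le f^*\le\tilde U_k$ this gives $f^*-\tilde L_k\le\tilde U_k-\tilde L_k\le\epsilon f^*$ and likewise $\tilde U_k-f^*\le\epsilon f^*$, i.e.\ the claimed $(1-\epsilon)$- and $(1+\epsilon)$-approximations. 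Since $\log N$ is polynomial in the binary encoding of $P$ and $1/\epsilon$ enters linearly, the chosen $k$ is polynomial in the input size and in $1/\epsilon$, so the running-time analysis of part~(i) makes the whole computation an FPTAS.

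For part~(iii) the plan is iterated domain bisection: repeatedly halve the range of one coordinate of a bounding box of $P$, splitting the current region $Q$ into rational subpolytopes $Q=Q_1\cup Q_2$, compute the upper bounds $U_k(Q_1),U_k(Q_2)$ by the same machinery, and retain the half with the larger $U_k$. Writing $N_i=|Q_i\cap\Z^d|\le N$ and letting $Q_1$ be the retained half, the chain $\max_{Q_1}f\ge N_1^{-1/k}U_k(Q_1)\ge N^{-1/k}U_k(Q_2)\ge N^{-1/k}\max_{Q_2}f$ shows that one step loses at most a factor $N^{-1/k}$ of the retained maximum $\max_Q f=\max(\max_{Q_1}f,\max_{Q_2}f)$. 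After $m$ bisections—with $m$ polynomial in the input size, enough to isolate a single lattice point $\mathbf x_\epsilon$—the compounded loss is $N^{-m/k}$, so taking $k$ larger by the factor $m$ than in part~(ii), which keeps it polynomial, guarantees $f(\mathbf x_\epsilon)\ge(1-\epsilon)f^*$.

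The main obstacle I anticipate is precisely this error bookkeeping in part~(iii): one must verify that the greedy ``larger upper bound'' selection rule never discards the branch carrying a near-optimal point, and that the per-step multiplicative error, amplified over polynomially many bisections, can be absorbed by only a polynomial increase in $k$. Parts~(i) and~(ii), by contrast, reduce to the $\ell_k$-versus-$\ell_\infty$ comparison and the single logarithmic inequality $\tfrac{\epsilon}{1+\epsilon}\le\log(1+\epsilon)$, which are routine once the generating-function machinery of Theorem~\ref{operators} is in place.
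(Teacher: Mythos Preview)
Your proposal is correct and follows essentially the same route as the paper. The paper itself does not give a self-contained proof of this theorem; it simply says ``From the discussion of the convergence of the bounds, one then obtains the following result,'' deferring to the $\ell_k$/$\ell_\infty$ comparison in the summation-method section, to Theorem~\ref{operators} for the running time, and to the cited source papers for the bisection argument of part~(iii). Your write-up faithfully fleshes out exactly these ingredients: the norm estimate $N^{-1/k}\|\mathbf s\|_k\le\|\mathbf s\|_\infty\le\|\mathbf s\|_k$ for~(i), the inequality $\tfrac{\epsilon}{1+\epsilon}\le\log(1+\epsilon)$ to certify the choice of~$k$ in~(ii), and the greedy upper-bound bisection with per-step loss $N^{-1/k}$ for~(iii), compensating for the $m$ steps by scaling~$k$ by~$m$. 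The only minor point to keep explicit is the one you already flag: the floor/ceiling in Algorithm~\ref{Algorithm} preserve the bracketing $L_k\le f^*\le U_k$ only once $f$ is made integer-valued on $P\cap\Z^d$ by clearing denominators, a polynomial-size rescaling.
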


\subsubsection{Extension to the mixed-integer case by discretization}

The mixed-integer case can be handled by \emph{discretization} of the continuous
variables.  We illustrate on an example that one needs to be careful
to pick a sequence of discretizations that actually converges.
Consider the mixed-integer \emph{linear} optimization problem depicted in
Figure~\ref{fig:example-slice-not-fulldim},
%%   \begin{equation}
%%     \label{eq:example-slice-not-fulldim}
%%     \begin{aligned}
%%       \max\quad& 2z - x\\
%%       \st\quad& z \leq 2x \\
%%       & z \leq 2(1-x) \\
%%       & x\in\R_{\geq0},\ z\in\{0,1\},
%%     \end{aligned}
%%   \end{equation}
whose feasible region consists of the point $(\frac12,1)$ and the segment
$\{\,(x,0): x\in[0,1]\,\}$.  The unique optimal solution
is $x=\frac12$, $z=1$.  Now consider
the sequence of grid approximations %% of~\eqref{eq:example-slice-not-fulldim}
where $x\in \frac1m \Z_{\geq0}$.  For even~$m$, the unique optimal solution to the
grid approximation is $x=\frac12$, $z=1$.  However, for odd~$m$, the unique
optimal solution is $x=0$, $z=0$.  Thus the full sequence of the optimal
solutions to the grid approximations does not converge because it has two limit
points; see Figure~\ref{fig:example-slice-not-fulldim}.
\begin{figure}[t]
  \begin{minipage}[t]{.3\linewidth}
    \ifpdf
    \input{miptriangle.pdf_t}
    \else
    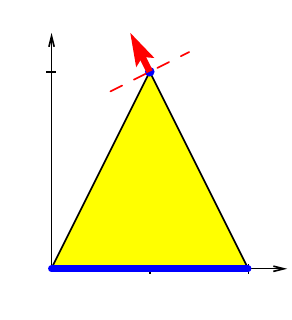
    \fi
  \end{minipage}
  \quad
  \begin{minipage}[t]{.3\linewidth}
    \ifpdf
    \input{miptriangle-even.pdf_t}
    \else
    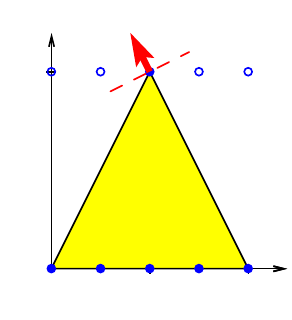
    \fi
  \end{minipage}
  \quad
  \begin{minipage}[t]{.3\linewidth}
    \ifpdf
    \input{miptriangle-11.pdf_t}
    \else
    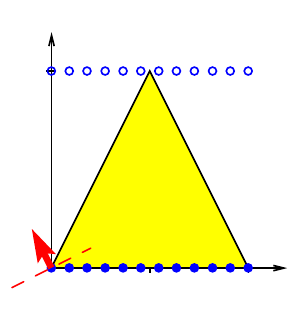
    \fi
  \end{minipage}
  \caption{A mixed-integer linear optimization problem and a sequence of
    optimal solutions to grid problems with two limit points, for even $m$
    and for odd $m$}
  \label{fig:example-slice-not-fulldim}
\end{figure}

To handle polynomial objective functions that take arbitrary (positive and
negative) values, one can shift the objective function
by a large constant.  Then, to obtain a strong approximation
result, one iteratively reduces the constant by a factor.
Altogether we have the following result.

 \begin{theorem}[Fully polynomial-time approximation schemes]
   \label{th:mipo-fptas}
   Let the dimension $n=n_1+n_2$ be fixed.
   Let an optimization problem~\eqref{eq:nonlinear-over-polyhedron} of
   a polynomial function~$f$ over the mixed-integer points of a polytope~$P$
   and an error bound~$\epsilon$ be given, where
   \begin{enumerate}[\quad\rm({I}$_\bgroup 1\egroup$)]
   \item $f$ is given as a list of
     monomials with rational coefficients~$c_{{\bm{\beta}}}$ encoded in binary and
     exponents~${\bm{\beta}}$ encoded in unary,
   \item $P$ is given by rational inequalities in binary encoding,
   \item the rational number~$\frac1\epsilon$ is given in unary encoding.
   \end{enumerate}
   \begin{enumerate}[\rm(a)]
   \item There exists a fully polynomial time approximation scheme ({\small
       FPTAS}) for the \emph{maximization} problem for all
     polynomial functions $f(\mathbf x,\allowbreak \mathbf z)$
     that are \emph{non-negative} on the feasible region.
     That is, there exists a polynomial-time algorithm that,
     given the above data,
     computes a feasible solution $(\mathbf x_\epsilon, \mathbf
     z_\epsilon) \in P\cap\bigl(\R^{n_1}\times \Z^{n_2}\bigr)$ with
     \begin{displaymath}
       \bigl|f(\mathbf
       x_\epsilon, \mathbf z_\epsilon) - f(\mathbf x_{\max}, \mathbf z_{\max})\bigr|
       \leq \epsilon  f(\mathbf x_{\max},\mathbf z_{\max}).
     \end{displaymath}
   \item
     There exists a polynomial-time algorithm that,
     given the above data,
     computes a feasible solution $(\mathbf x_\epsilon, \mathbf
     z_\epsilon) \in P\cap\bigl(\R^{n_1}\times \Z^{n_2}\bigr)$ with
     \begin{displaymath}
       \bigl|f(\mathbf
       x_\epsilon, \mathbf z_\epsilon) - f(\mathbf x_{\max}, \mathbf z_{\max})\bigr|
       \leq \epsilon \bigl| f(\mathbf x_{\max},\mathbf z_{\max}) - f(\mathbf x_{\min}, \mathbf z_{\min}) \bigr|.
     \end{displaymath}
     In other words, this is a weak FPTAS.
   \end{enumerate}
 \end{theorem}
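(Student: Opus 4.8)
The plan is to reduce the mixed-integer problem to a sequence of purely integer problems by \emph{discretizing} the continuous variables, and then to invoke the integer FPTAS furnished by Algorithm~\ref{Algorithm} on each discretized instance. Fix the finitely many integer parts $\mathbf z\in\Z^{n_2}$ for which the slice $P_{\mathbf z}=\{\,\mathbf x\in\R^{n_1}:(\mathbf x,\mathbf z)\in P\,\}$ is nonempty; there are finitely many because $P$ is a bounded polytope, and in fixed dimension they can be enumerated. For a positive integer $m$, replace each slice by its grid points $P_{\mathbf z}\cap\frac1m\Z^{n_1}$. After the substitution $\mathbf x\mapsto \mathbf x/m$ this becomes a purely integer optimization of the polynomial $f(\mathbf x/m,\mathbf z)$ — of the same total degree, with rational coefficients, and non-negative on the feasible grid — over a rational polytope of the same fixed dimension $n$, whose encoding length is polynomial in the original data provided $m$ is polynomially bounded. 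On each such instance I would run the integer FPTAS established above.

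The delicate point — already signalled by the example of Figure~\ref{fig:example-slice-not-fulldim} — is the choice of $m$ guaranteeing convergence. A generic grid can entirely miss a lower-dimensional slice (e.g.\ the single point $x=\tfrac12$ when $m$ is odd), so the sequence of grid optima need not converge. I would remedy this by choosing $m$ divisible by a denominator $q$ that clears all vertex coordinates of all nonempty slices $P_{\mathbf z}$; since $P$ is rational and the dimension is fixed, such a $q$ is bounded polynomially in the input by the standard encoding-length estimates for vertices of rational polyhedra. For $m=q\ell$ the grid then contains every vertex of every nonempty slice, so no slice is missed, and within the rational affine hull of each slice the grid becomes uniformly dense with spacing $\Order(1/(q\ell))$. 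Because $f$ is a fixed-degree polynomial on the bounded set $P$, it is Lipschitz there with a constant $L$ computable from the data; hence the grid optimum differs from the true optimum by at most $L\cdot\Order(1/(q\ell))$. Choosing $\ell$ (and hence $m$) polynomial in $1/\epsilon$, $L$, and $q$ makes this discretization error at most $\tfrac\epsilon2\,f(\mathbf x_{\max},\mathbf z_{\max})$, and combining it with the $(1-\tfrac\epsilon2)$-guarantee of the integer FPTAS on the grid instance yields a feasible mixed-integer point with $|f(\mathbf x_\epsilon,\mathbf z_\epsilon)-f(\mathbf x_{\max},\mathbf z_{\max})|\le\epsilon\,f(\mathbf x_{\max},\mathbf z_{\max})$. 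This proves part~(a); the running time is polynomial because the number of slices, the denominator $m$, and the exponent $k$ in Algorithm~\ref{Algorithm} are all polynomially bounded.

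For part~(b) the objective may change sign, so the multiplicative guarantee of~(a) is not directly available. I would first compute, from bounds on $|f|$ over $P$ (again available since $f$ has fixed degree and $P$ is a rational polytope), a constant $M$ with $f+M\ge0$ on the feasible region, and apply~(a) to the non-negative function $f+M$. The resulting point obeys $f_{\max}-f_{\mathcal A}\le\epsilon\,(f_{\max}+M)$, which is too weak when $M$ greatly exceeds $-f_{\min}$. To sharpen it into the range-relative bound \eqref{eq:deklerk-approx-1}, I would drive $M$ down towards $-f_{\min}$ by the iteration foreshadowed before the theorem: starting from a safe over-estimate of $-f_{\min}$ supplied by the data, repeatedly shrink the shift by a constant factor, re-running the FPTAS after each reduction, until $M+f_{\min}$ is at most a constant multiple of the range $f_{\max}-f_{\min}$. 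Then $f_{\max}+M=\Order(f_{\max}-f_{\min})$, and absorbing the hidden constant into $\epsilon$ gives $|f_{\mathcal A}-f_{\max}|\le\epsilon\,|f_{\max}-f_{\min}|$; since only polynomially many reductions are needed, the whole scheme remains fully polynomial. The main obstacle throughout is the grid-selection step of part~(a): ensuring, uniformly over all integer slices simultaneously, that the discretization both reaches every nonempty (possibly low-dimensional) slice and approximates the continuous optimum within each, without letting the denominator $m$ grow superpolynomially.
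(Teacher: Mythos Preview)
Your overall strategy matches the paper's sketch exactly: discretize the continuous variables to reduce to a pure integer problem, apply the integer FPTAS of Algorithm~\ref{Algorithm}, and for part~(b) shift by a large constant and iteratively shrink it. The paper gives no more detail than this (it defers to the cited references), so in outline you are on the same track.

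There is, however, one concrete slip that you should repair. You assert that ``the number of slices\ldots\ are all polynomially bounded,'' and earlier that the integer parts $\mathbf z$ ``can be enumerated.'' This is false: the number of $\mathbf z\in\Z^{n_2}$ with $P_{\mathbf z}\neq\emptyset$ is in general exponential in the binary encoding of~$P$ (already for a box $[0,N]^{n_2}$ with $N$ in binary). If your algorithm actually looped over slices --- either to run the FPTAS slice by slice, or to compute a denominator~$q$ clearing the vertices of \emph{every} slice --- it would not be polynomial. The point of the generating-function machinery is precisely that after discretization you obtain a \emph{single} pure-integer problem over $\bigl\{(\mathbf x',\mathbf z)\in\Z^{n_1}\times\Z^{n_2}:(\mathbf x'/m,\mathbf z)\in P\bigr\}$ in fixed dimension $n=n_1+n_2$, and Algorithm~\ref{Algorithm} handles its exponentially many lattice points implicitly via the short rational function. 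Likewise, the denominator~$q$ need not be computed slice by slice: every vertex of every slice $P_{\mathbf z}$ arises as the solution of a subsystem of $A\mathbf x\le\mathbf b$ together with equations $z_i=\text{integer}$, so a uniform Cramer bound on subdeterminants of~$A$ gives a single~$q$ of polynomial encoding size that works for all slices simultaneously. Once you rephrase the argument this way, the polynomial running time follows without any reference to the number of slices, and your treatment of part~(b) via shifting and iterated reduction is exactly what the paper indicates.
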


\subsubsection{Open question}

Consider the problem \eqref{eq:nonlinear-over-polyhedron} for a fixed number
$n_2$ of integer variables and a \emph{varying} number~$n_1$ of continuous
variables.  Of course, even with no integer variables present ($n_2=0$), this
is NP-hard and inapproximable.  On the other hand, if the objective
function~$f$ is \emph{linear}, the problem can be solved in polynomial time
using Lenstra's algorithm.  Thus it is interesting to consider  the problem
for an objective
function of restricted nonlinearity, such as 
\begin{displaymath}
  f(\ve x, \ve z) = g(\ve z) + \ve c^\top\ve x,
\end{displaymath}
with an arbitrary polynomial function~$g$ in the integer variables and a
linear form in the continuous variables.  The complexity (in particular the
existence of approximation algorithms) of this problem is an open question.

\section{Polynomial-time algorithms}

Here we study important special cases where polynomial-time algorithms can be
obtained.  We also include cases here where the algorithms efficiently
approximate the optimal solution to arbitrary precision, as discussed in 
section~\ref{s:encoding}.

\subsection{Fixed dimension: Continuous polynomial optimization}
\label{s:renegar}

Here we consider pure continuous polynomial optimization problems of the form
\begin{equation}
\begin{aligned}
  \hbox{min}\quad & f(x_1,\dots,x_n)\\
  \hbox{s.t.}\quad & g_1(x_1,\dots,x_n) \leq 0 \\
  & \quad\vdots \\
  & g_m(x_1,\dots,x_n) \leq 0 \\
  & \mathbf x\in\R^{n_1}.
\end{aligned} \label{eq:nonlinear-over-nonlinear-cont}
\end{equation}
When the dimension is fixed, this problem can be solved in polynomial time, in
the sense that there exists an algorithm that efficiently computes an
approximation to an optimal solution.  This follows from a much more general
theory on the computational complexity of approximating the solutions to
general algebraic and semialgebraic formulae over the real numbers by
Renegar~\cite{Renegar:1992:Approximating}, which we review in the following.
The bulk of this theory was developed in
\cite{Renegar:1992:CCGa,Renegar:1992:CCGb,Renegar:1992:CCGc}.  Similar results
appeared in \cite{heintz-roy-solerno-1990}; see also
\cite[Chapter 14]{basu-pollack-roy-2006:algo-real-algebraic-geom}).
One considers problems associated with logic
formulas of the form
\begin{equation}\label{s:quantified-formula}
  \mathrm{Q}_1\mathbf x^1\in\R^{n_1}\colon \ldots\ \mathrm{Q}_\omega\mathbf
  x^\omega\in\R^{n_\omega}\colon
  P(\mathbf y, \mathbf x^1,\dots,\mathbf x^\omega) 
\end{equation}
with quantifiers $\mathrm Q_i\in\{\exists,\forall\}$, where $P$ is a Boolean
combination of polynomial inequalities such as
\begin{displaymath}
  g_i(\mathbf y, \mathbf x^1,\dots,\mathbf x^\omega) \leq 0,\quad i=1,\dots,m,
\end{displaymath}
or using $\geq$, $<$, $>$, or $=$ as the relation.  Here $\ve y\in\R^{n_0}$ is a 
free (i.e., not quantified) variable.  Let $d\geq2$ be an upper
bound on the degrees of the polynomials $g_i$. A vector 
$\ve{\bar y}\in\R^{n_0}$ is called a \emph{solution} of this formula if the
formula~\eqref{s:quantified-formula} becomes a true logic sentence if we
set~$\ve y=\ve{\bar y}$.  Let $Y$ denote the set of all solutions. 
An \emph{$\epsilon$-approximate solution} is a vector~$\ve
y_\epsilon$ with $\|\ve{\bar y} - \ve y_\epsilon\| < \epsilon$ for some
solution~$\ve{\bar y}\in Y$.  

The following bound can be proved.  When the number~$\omega$ of ``blocks'' of
quantifiers (i.e., the number of alternations of the quantifiers $\exists$ and
$\forall$) is fixed, then the bound is singly exponential in the dimension.
\begin{theorem}\label{th:renegar-bound}
  If the formula~\eqref{s:quantified-formula} has only integer coefficients of
  binary encoding size at most~$\ell$, then every connected component of~$Y$
  intersects with the ball $\{ \| \ve y\| \leq r\}$, where 
  \begin{displaymath}
    \log r =
    \ell(md)^{2^{\Order(\omega)} n_0 n_1\cdots n_k}.
  \end{displaymath}
\end{theorem}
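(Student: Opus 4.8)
The plan is to split the argument into two stages and then compose the resulting bounds. In the first stage I would eliminate all the quantifiers, replacing $Y$ by the solution set of an equivalent quantifier-free formula in the free variables $\ve y$ alone, while tracking carefully how the three combinatorial parameters — the degree, the number of atomic polynomials, and the coefficient bit-length — grow. In the second stage I would prove the ball bound for a quantifier-free semialgebraic set, where the problem reduces to a root-size estimate for an explicit zero-dimensional polynomial system. The radius bound of Theorem~\ref{th:renegar-bound} then follows by substituting the parameters produced in stage one into the estimate of stage two.

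For the elimination I would run Renegar's quantifier-elimination procedure block by block, innermost first. Each block $\mathrm Q_i\ve x^i\in\R^{n_i}$ is removed by the critical-point method: one describes the projection of the current semialgebraic set through suitable subresultant and discriminant systems and determines the signs of the defining polynomials on a finite parametrized family of sample points. The decisive quantitative fact is that eliminating one block of $n_i$ variables raises the degree of the polynomials from $D$ to $D^{\Order(n_i)}$ and their number from $M$ to $(MD)^{\Order(n_i)}$, with a comparable multiplicative growth of the coefficient bit-length. Starting from $M=m$, $D=d$, bit-length $\ell$ and composing these $\omega$ estimates, the exponents multiply, so the output degree and atom count are bounded by $(md)^{\prod_{i=1}^{k}\Order(n_i)}$. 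Since each factor $\Order(n_i)$ hides a universal constant $C$, the product of the $\omega$ hidden constants is $C^{\omega}=2^{\Order(\omega)}$, giving an aggregate exponent $2^{\Order(\omega)}n_1\cdots n_k$ and a coefficient bit-length bounded by $\ell$ times the same quantity. This stage is the main obstacle: it is precisely the content of Renegar's three-part analysis, and the delicate point is to certify that the sample-point constructions keep the degree growth at each block to the stated single power, so that the alternation count $\omega$ enters only through the constant $C^{\omega}$ and not through a genuine tower.

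With the quantifiers gone, $Y\subseteq\R^{n_0}$ is cut out by $M$ polynomials of degree at most $D$ and coefficient bit-length at most $L$, where $M$, $D$, and $L$ are all controlled by the stage-one expression. To locate a point in an arbitrary connected component $Y_0$, I would minimize the proper function $\ve y\mapsto\|\ve y\|^2$ on $Y_0$, perturbing by a generic linear form if necessary to force a unique minimizer. Such a minimizer is a critical point and hence satisfies a polynomial system — the equalities of the active stratum together with the Lagrange/Jacobian conditions — whose degrees and coefficient sizes remain controlled by $D$ and $L$. Applying a Cauchy-type root bound to the elimination ideal of this zero-dimensional system then shows that every coordinate of the minimizer has absolute value at most $2^{L\cdot D^{\Order(n_0)}}$, so $Y_0$ meets the ball of that radius.

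Finally I would combine the two estimates. Substituting $D\le(md)^{2^{\Order(\omega)}n_1\cdots n_k}$ and $L\le\ell\,(md)^{2^{\Order(\omega)}n_1\cdots n_k}$ into the root bound $\log r\le L\cdot D^{\Order(n_0)}$, the factor $D^{\Order(n_0)}$ contributes an extra $\Order(n_0)$ in the exponent of $(md)$, so the exponents combine into $2^{\Order(\omega)}\,n_0 n_1\cdots n_k$ and one obtains $\log r=\ell\,(md)^{2^{\Order(\omega)}n_0 n_1\cdots n_k}$, as claimed. Thus $\ell$ enters only linearly through the coefficient tracking of stage one, $n_0$ enters through the critical-point count of stage two, and $\omega$ enters merely through the product of the per-block constants, which is exactly why the bound is singly exponential in the dimensions once the number of alternations is fixed.
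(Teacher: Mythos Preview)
The paper does not prove Theorem~\ref{th:renegar-bound} at all. This is a survey article, and the theorem is stated as a result taken directly from Renegar's work \cite{Renegar:1992:Approximating,Renegar:1992:CCGa,Renegar:1992:CCGb,Renegar:1992:CCGc} (with related results in \cite{heintz-roy-solerno-1990} and \cite{basu-pollack-roy-2006:algo-real-algebraic-geom}); no argument is given beyond the citation. So there is no ``paper's own proof'' to compare your proposal against.

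That said, your outline is a faithful high-level description of how Renegar's bound is actually obtained: block-by-block quantifier elimination via the critical-point method with controlled growth of degree, atom count, and bit size, followed by a root bound for a zero-dimensional system obtained from a critical-point computation on each connected component of the resulting quantifier-free set. Your accounting of where each factor in the exponent $2^{\Order(\omega)} n_0 n_1\cdots n_k$ comes from --- the $n_i$ from the per-block degree blow-up, the $2^{\Order(\omega)}$ from the product of the hidden constants across $\omega$ blocks, and the $n_0$ from the final critical-point/root-bound step in the free variables --- matches the structure of Renegar's analysis. The main caveat is that your sketch is exactly that: the hard technical content, as you yourself note, lies entirely in certifying the single-exponential degree growth per block, and this is not something one can write out short of reproducing a substantial portion of Renegar's three papers. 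As a proof \emph{proposal} it is appropriate; as a self-contained proof it would not be.
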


This bound is used in the following fundamental result, which gives a general
algorithm to compute $\epsilon$-approximate solutions to the
formula~\eqref{s:quantified-formula}. 
\begin{theorem}\label{th:renegar-approx}
  There exists an algorithm that, given numbers $0<\epsilon< r$ that are
  integral powers of~$2$ and a formula~\eqref{s:quantified-formula}, computes
  a set $\{\ve y_i\}_i$ of $(md)^{2^{\Order(\omega)} n_0 n_1\cdots n_k}$
  distinct
  $\epsilon$-approximate solutions of the formula with the property that for each connected
  components of $Y \cap \{ \| \ve y\| \leq r\}$ one of the $\ve y_i$ is within
  distance~$\epsilon$. 
  The algorithm runs in time 
  \begin{displaymath}
    (md)^{2^{\Order(\omega)} n_0n_1\dots n_k} \bigl(\ell
    +md+ \log \tfrac1\epsilon + \log r\bigr)^{\Order(1)}.
  \end{displaymath}
\end{theorem}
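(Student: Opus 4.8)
The plan is to proceed by effective quantifier elimination, followed by a connected-component sampling step and a final numerical approximation step, controlling degrees, coefficient sizes, and especially the dependence on the number~$\omega$ of quantifier blocks at every stage. The output set $\{\ve y_i\}$ will consist of rational approximations to sample points, one taken in (a neighborhood of) each connected component of $Y\cap\{\|\ve y\|\le r\}$.

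First, I would eliminate the quantifiers block by block. By the Tarski--Seidenberg principle the solution set~$Y$ is semialgebraic; the point, however, is to produce an \emph{equivalent quantifier-free} description of~$Y$ whose defining polynomials have degree bounded by $(md)^{2^{\Order(\omega)}}$ and whose number and coefficient sizes are likewise controlled. Renegar's procedure eliminates one block $\mathrm{Q}_i\ve x^i$ at a time, and the crucial feature is that eliminating a single block raises the relevant degree only by a factor polynomial in the current degree and in~$n_i$, so that after $\omega$ blocks the compounding yields merely the factor $2^{\Order(\omega)}$ in the exponent rather than a multiplicative blow-up across blocks. This is the technical heart of the argument and the step I expect to be the main obstacle: to keep the complexity singly exponential in the dimension for fixed~$\omega$, one cannot treat quantifier elimination as a black box, but must interleave the refined block-elimination with the critical-point sampling described below, carrying the bounds of Theorem~\ref{th:renegar-bound} through each elimination so that the radius~$r$ and coefficient length~$\ell$ propagate correctly.

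Second, having reduced to a quantifier-free formula defining~$Y$, I would adjoin the constraint $\|\ve y\|^2\le r^2$ and find at least one sample point in every connected component of $Y\cap\{\|\ve y\|\le r\}$. The standard device is the critical-point method: introduce an infinitesimal~$\zeta$ to deform the defining system into a smooth, bounded variety, and compute the critical points of a generic linear projection (or of the squared distance to a generic point) restricted to this deformed set. These critical points form a zero-dimensional set meeting every connected component, and passing to the limit $\zeta\to0$ produces sample points in each component of the original set. A B\'ezout-type estimate bounds the number of such points by $(md)^{2^{\Order(\omega)}n_0n_1\cdots n_k}$, which gives the claimed cardinality of the output.

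Third, each sample point is an algebraic number specified by a rational univariate representation, namely as a root of an explicitly computed univariate polynomial together with rational coordinate parametrizations. To turn these into rational $\epsilon$-approximate solutions, I would isolate the relevant real roots to accuracy~$\epsilon$ using root-separation bounds together with Sturm sequences or interval bisection; the cost of this refinement contributes the factor $\log\tfrac1\epsilon$, while the ball radius enters through $\log r$ and the input coefficient size through~$\ell$. Collecting the degree bounds from the first two steps into the exponent $2^{\Order(\omega)}n_0n_1\cdots n_k$, and the bit-complexity of the symbolic arithmetic and root isolation into the polynomial factor $(\ell+md+\log\tfrac1\epsilon+\log r)^{\Order(1)}$, then yields the stated running time.
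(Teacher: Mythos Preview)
The paper does not supply a proof of this theorem at all: it is stated as a quotation of Renegar's result, with attribution to \cite{Renegar:1992:Approximating} and the supporting trilogy \cite{Renegar:1992:CCGa,Renegar:1992:CCGb,Renegar:1992:CCGc}; immediately after the statement the text moves on to applying it to the optimization formula~\eqref{eq:optimum-formula}. So there is no ``paper's own proof'' to compare your proposal against.

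That said, your outline is a faithful high-level summary of what Renegar actually does in those references: block-by-block elimination with degree control that produces the $2^{\Order(\omega)}$ factor in the exponent, a critical-point/sampling construction to hit each connected component, and univariate root isolation to pass from algebraic sample points to rational $\epsilon$-approximations. You correctly flag the delicate point, namely that one must interleave elimination with the sampling machinery rather than apply a black-box cylindrical decomposition, in order to keep the exponent a product $n_0 n_1\cdots n_k$ rather than something doubly exponential. As a sketch this is sound; as a self-contained proof it would still require the substantial technical content of Renegar's papers (the precise degree and bit-size bounds through each elimination step, and the correctness of the infinitesimal-perturbation limit), which your proposal acknowledges but does not carry out.
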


This can be applied to polynomial optimization problems as follows.  Consider
the formula
\begin{equation}
  \label{eq:optimum-formula}
  \begin{aligned}
    \forall \ve x\in\R^{n_1} : {}&g_1(\ve y) \leq 0 \ \wedge\ \cdots\ \wedge\  g_m(\ve
    y)\leq 0 \\
    &{}\wedge  \bigl[ g_1(\ve x) > 0 \ \vee\  \cdots \ \vee\  g_m(\ve x) > 0 \
    \vee\  f(\ve y)
      - f(\ve x) <0 \bigr],
  \end{aligned}
\end{equation}
this describes that~$\ve y$ is an optimal solution (all other solutions~$\ve
x$ are either infeasible or have a higher objective value).  Thus optimal
solutions can be efficiently approximated using the algorithm of
Theorem~\ref{th:renegar-approx}. 

\subsection{Fixed dimension: Convex and quasi-convex integer polynomial minimization}
\label{s:convex-min}

In this section we consider the case of the minimization of convex and
quasi-convex polynomials~$f$ over the mixed-integer points in convex regions
given by convex and quasi-convex polynomial functions~$g_1,\dots,g_m$:
\begin{equation}
\begin{aligned}
  \hbox{min}\quad & f(x_1,\dots,x_n)\\
  \hbox{s.t.}\quad & g_1(x_1,\dots,x_n) \leq 0 \\
  & \quad\vdots \\
  & g_m(x_1,\dots,x_n) \leq 0 \\
  & \mathbf x\in\R^{n_1} \times \Z^{n_2},
\end{aligned} \label{eq:nonlinear-over-nonlinear}
\end{equation}
Here a function $g\colon \R^n\to\R^1$ is called \emph{quasi-convex} if every
lower level set $L_\lambda = \{\, \ve x\in\R^n: g(x) \leq \lambda\,\}$ is a convex
set. 

%% The strongest results are available when all functions $f$, $g_i$ are
%% polynomial functions.  %%% FIXME: Anything known for non-polynomial case??
The complexity in this setting is fundamentally
different from the general (non-convex) case.   One important aspect is 
that bounding results for the coordinates of optimal \emph{integer} solutions
exists, which are similar to the ones for continuous solutions in
Theorem~\ref{th:renegar-bound} above. 
For the case of
convex functions, these bounding results were obtained by
\cite{khachiyan:1983:polynomial-programming, tarasov-khachiyan-1980}.  An
improved bound was obtained by
\cite{bank-krick-mandel-dolerno-1991,bank-heintz-krick-mandel-solerno-1993},
which also handles the more general case of quasi-convex polynomials.  
This bound follows from the efficient theory of quantifier elimination over
the real numbers that we referred to in section~\ref{s:renegar}.
\begin{theorem}\label{th:encoding-length-convex-optimum}
  Let $f, g_1,\dots,g_m \in\Z[x_1,\dots,x_n]$ be quasi-convex polynomials
  of degree at most~$d\geq2$, whose coefficients have a binary encoding length
  of at most~$\ell$.  Let
  \begin{displaymath}
    F = \bigl\{\, \mathbf x\in\R^n : g_i(\mathbf x) \leq  0\quad
    \text{for $i=1,\dots,m$} \,\bigr\}
  \end{displaymath}
  be the (continuous) feasible region.  If the integer minimization
  problem
  \begin{math}
    \min\{\, f(\mathbf x): \mathbf x\in F\cap\Z^n \,\}
  \end{math}
  is bounded, there
  exists a radius~$R\in\Z_+$ of binary encoding length at most $(md)^{\mathrm
    O(n)} \ell$ such that
  \begin{displaymath}
    \min\bigl\{\, f(\mathbf x): \mathbf x\in F\cap\Z^n \,\bigr\}
    = \min\bigl\{\, f(\mathbf x): \mathbf x\in F\cap\Z^n, \ \mathopen\| \mathbf x
    \mathclose\| \leq R \,\bigr\}.%
  \end{displaymath}%
\end{theorem}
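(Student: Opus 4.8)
The plan is to derive the radius bound $R$ from the quantifier-elimination machinery of Section~\ref{s:renegar}, specifically Theorem~\ref{th:renegar-bound}, by encoding the statement ``$\mathbf x$ is feasible and achieves the minimum'' as a suitable quantified formula over the reals and then exploiting quasi-convexity to pass from the real to the integer feasible region. First I would write down the formula describing an optimal \emph{continuous} minimizer in the style of~\eqref{eq:optimum-formula}, namely the statement that $\mathbf y$ satisfies $g_i(\mathbf y)\le 0$ for all $i$ together with the universally quantified clause asserting that every $\mathbf x$ is either infeasible or has $f(\mathbf x)\ge f(\mathbf y)$. This is a formula of the form~\eqref{s:quantified-formula} with one block of quantifiers ($\omega=1$), with the polynomials $f,g_1,\dots,g_m$ of degree at most~$d$ and coefficient encoding length at most~$\ell$. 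Applying Theorem~\ref{th:renegar-bound} to this formula yields a radius $r$ with $\log r = \ell(md)^{\mathrm O(n)}$, so that every connected component of the solution set meets the ball of radius~$r$; this supplies the desired bound of the form $(md)^{\mathrm O(n)}\ell$ on the binary encoding length of~$R$.

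The crucial point, and where quasi-convexity enters, is to argue that this real bound actually controls the \emph{integer} minimum. Here the key geometric fact is that the lower level sets $L_\lambda = \{\mathbf x : f(\mathbf x)\le\lambda\}$ and the feasible region $F$ are all convex, being intersections of convex level sets of the quasi-convex polynomials $f,g_1,\dots,g_m$. The plan is to let $\lambda^*$ denote the integer optimal value and to consider the convex set $F\cap L_{\lambda^*}$, which is nonempty (it contains the integer optimum) and whose bounding behaviour is governed by the continuous data. I would invoke the bound from Theorem~\ref{th:renegar-bound} to show that this convex region, or a suitable face of it witnessing the optimum, has a point within the ball of radius $r$, and then argue that because the set is convex one may restrict attention to integer points of bounded norm without changing the minimum value. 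Concretely, if an integer optimal point lay outside $\{\|\mathbf x\|\le R\}$, convexity of the level sets would force the existence of an integer point of equal or smaller objective value inside the ball, contradicting that $R$ was chosen via the Renegar bound.

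The main obstacle I anticipate is precisely this last passage from continuous to integer solutions: the Renegar bound of Theorem~\ref{th:renegar-bound} is a statement about connected components of real semialgebraic solution sets, and it does not by itself guarantee that an \emph{integer} minimizer lies in the ball of radius~$R$. Bridging this gap is exactly what quasi-convexity is for, and this is the step that genuinely requires the references \cite{bank-krick-mandel-dolerno-1991,bank-heintz-krick-mandel-solerno-1993}: one needs a convexity-based argument showing that if the integer minimum is attained at all, it is attained within a ball whose radius is controlled by the same $(md)^{\mathrm O(n)}\ell$ quantity. I would structure this as a separate lemma, using that along any ray leaving the ball the quasi-convex objective is eventually nondecreasing on the convex feasible region, so that no integer point far from the origin can improve upon the best integer point inside the ball. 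Verifying the precise exponent $(md)^{\mathrm O(n)}$ in the integer setting, rather than merely $(md)^{2^{\mathrm O(\omega)}n_0n_1\cdots n_k}$, then follows from the fact that $\omega=1$ here, collapsing the doubly exponential factor $2^{\mathrm O(\omega)}$ to a constant.
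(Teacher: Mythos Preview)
The paper does not actually prove Theorem~\ref{th:encoding-length-convex-optimum}; it merely states the result, attributes it to \cite{bank-krick-mandel-dolerno-1991,bank-heintz-krick-mandel-solerno-1993}, and remarks in one sentence that ``this bound follows from the efficient theory of quantifier elimination over the real numbers that we referred to in section~\ref{s:renegar}.'' So your high-level plan---encode an appropriate first-order formula and invoke Theorem~\ref{th:renegar-bound}---is exactly the mechanism the paper points to, and in that sense you are on the intended track.

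That said, the concrete argument you sketch for the passage from the continuous to the integer bound does not work as written. You propose to write the formula~\eqref{eq:optimum-formula} for a \emph{continuous} minimizer, obtain a radius bound for the continuous optimal set, and then argue by quasi-convexity that an integer optimum must also lie within that radius. But the integer optimum need have nothing to do with the continuous optimum: the integer optimal value $\lambda^*$ can be far from the continuous optimal value, and the sublevel set $F\cap L_{\lambda^*}$ is a convex set about which the Renegar bound for~\eqref{eq:optimum-formula} says nothing. Your two proposed fixes---``convexity of the level sets would force the existence of an integer point of equal or smaller objective value inside the ball'' and ``along any ray leaving the ball the quasi-convex objective is eventually nondecreasing''---both fail for the same reason: convexity of a region does not manufacture integer points inside it. A convex sublevel set can perfectly well contain a single integer point located arbitrarily far from the origin, and monotonicity of $f$ along rays tells you nothing unless you already know there is an integer feasible point near the center of those rays.

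What is actually needed, and what the cited references supply, is a formula (or a sequence of formulas) whose real solution set already controls the location of integer optima---for instance by bounding the entire feasible region $F$, or a face of it, or by exploiting that boundedness of the integer problem over a convex region forces structural restrictions on the recession cone of $F$ that can themselves be expressed semialgebraically with the given data. You correctly flag this step as the crux and correctly point to the references, but the specific lemma you outline would not close the gap.
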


Using this finite bound, a trivial enumeration algorithm can find an optimal
solution (but not in polynomial time, not even in fixed dimension).  
Thus the incomputability result for
integer polynomial optimization (Theorem \ref{th:polyopt-incomputable}) does
not apply to this case.

The unbounded case can be efficiently detected in the case of quasi-convex
polynomials; see \cite{bank-krick-mandel-dolerno-1991} and
\cite{obuchowska-2008:boundedness-convex-integer}, the latter of which also
handles the case of ``faithfully convex'' functions that are not polynomials.
%% These results on the quasi-convex polynomial case are based on
%% \cite{bank-mandel-1988:parametric-integer}.
\smallbreak

%%\subsection{Fixed dimension}
\label{s:convex-min-fixed-dim}

In fixed dimension, the problem of convex integer minimization can be solved
efficiently
using variants of Lenstra's algorithm \cite{Lenstra83} for
integer programming.  
  \begin{figure}[t]
    \centering
    \ifpdf
    \input{branching.pdf_t}
    \else
    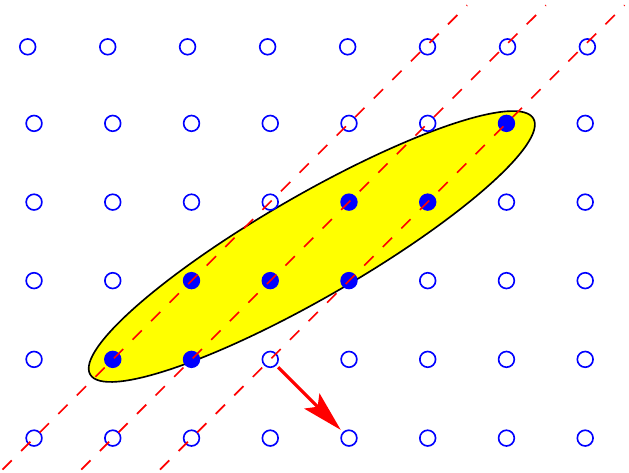
    \fi
    \caption{Branching on hyperplanes corresponding to approximate lattice
      width directions of the feasible region in a Lenstra-type algorithm}
    \label{fig:branching}
  \end{figure}%
Lenstra-type algorithms are algorithms for solving feasibility problems. 
We consider a family of feasibility problems associated
with the optimization problem, 
\begin{equation}
  \exists \ve x\in F_\alpha\cap\Z^n
  \quad\text{where}\quad F_\alpha = \{\, \ve x\in F: f(\ve x) \leq \alpha \,\}
  \quad\text{for $\alpha\in\Z$}.
\end{equation}
If bounds for $f(\ve x)$ on the feasible regions of polynomial binary encoding size
are known, a polynomial time algorithm for this feasibility problem can be
used in a binary search to solve the optimization problem in polynomial time.
Indeed, when the dimension~$n$ is fixed, the bound~$R$
given by Theorem \ref{th:encoding-length-convex-optimum} has a binary encoding
size that is bounded polynomially by the input data.  

A Lenstra-type algorithm uses branching on hyperplanes
(Figure~\ref{fig:branching}) to obtain polynomial time complexity in fixed
dimension.  Note that only the binary encoding size of the bound $R$, but not
$R$ itself, is bounded polynomially.  Thus, multiway branching on the values
of a single variable~$x_i$ will create an exponentially large number of
subproblems.  Instead, a Lenstra-type algorithm computes a primitive lattice
vector $\ve w\in\Z^n$ such that there are only few lattice hyperplanes $\ve
w^\top\ve x = \gamma$ (with $\gamma\in\Z$) that can have a nonempty intersection
with~$F_\alpha$.  The \emph{width} of $F_\alpha$ in the direction~$\ve w$, defined as
\begin{equation}
  \max\{\, \ve w^\top\ve x: x\in F_\alpha \,\} 
  - \min\{\, \ve w^\top\ve x: x\in F_\alpha \,\}
\end{equation}
essentially gives the number of these lattice points.  A \emph{lattice width
  direction} is a minimizer of the width among all directions $\ve w\in\Z^n$,
the \emph{lattice width} the corresponding width.  Any polynomial bound on the
width will yield a polynomial-time algorithm in fixed dimension.

Exact and approximate lattice width directions~$\ve w$ can be constructed
using geometry of numbers 
techniques.  We refer to the excellent tutorial
\cite{eisenbrand:50-years-ip-geom-num-chapter} and the classic references
cited therein.  The key to dealing with the feasible region~$F_\alpha$ is to
apply ellipsoidal rounding.  By applying the shallow-cut ellipsoid method
(which we describe in more detail below), one finds concentric proportional inscribed and
circumscribed ellipsoids that differ by some factor $\beta$ that only depends
on the dimension~$n$.  Then any $\eta$-approximate lattice width direction for
the ellipsoids gives a $\beta\eta$-approximate lattice width direction
for~$F_\alpha$.  Lenstra's original algorithm now uses an LLL-reduced basis of
the lattice~$\Z^n$ with respect to a norm associated with the ellipsoid; the last basis
vector then serves as an approximate lattice width direction.

The first algorithm of this kind for convex integer minimization
was announced by Khachiyan \cite{khachiyan:1983:polynomial-programming}.
In the following we present the variant of Lenstra's algorithm due to Heinz
\cite{heinz-2005:integer-quasiconvex}, which seems to yield the best 
complexity bound for the problem published so far.  The complexity result is the following.
\begin{theorem}\label{th:heinz-complexity}
  Let $f, g_1,\dots,g_m \in\Z[x_1,\dots,x_n]$ be quasi-convex polynomials of
  degree at most~$d\geq2$, whose coefficients have a binary encoding length of
  at most~$\ell$.  There exists an algorithm running in time $m
  \ell^{\mathrm{O}(1)} d^{\mathrm{O}(n)} 2^{\mathrm{O}(n^3)}$ that computes a
  minimizer~$\mathbf x^*\in\Z^n$ of the problem~\eqref{eq:nonlinear-over-nonlinear}
  or reports that no minimizer exists.  If the algorithm outputs a
  minimizer~$\mathbf x^*$, its binary encoding size is $\ell
  d^{\mathrm{O}(n)}$.
\end{theorem}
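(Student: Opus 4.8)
The plan is to establish the complexity bound of Theorem~\ref{th:heinz-complexity} by combining the bounding result of Theorem~\ref{th:encoding-length-convex-optimum} with a Lenstra-type feasibility algorithm embedded in a binary search over the objective value. First I would reduce the optimization problem to a polynomial number of feasibility problems of the form $\exists\,\ve x\in F_\alpha\cap\Z^n$, where $F_\alpha=\{\ve x\in F : f(\ve x)\leq\alpha\}$. By Theorem~\ref{th:encoding-length-convex-optimum}, if the problem is bounded there is a radius $R$ of binary encoding length $(md)^{\Order(n)}\ell$ such that an optimal solution lies in the ball $\|\ve x\|\leq R$; hence the objective value at optimality lies in an interval whose endpoints have binary encoding size polynomial in the input. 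A binary search over $\alpha$ in this interval requires only $O\bigl((md)^{\Order(n)}\ell\bigr)$ feasibility tests, so the overall running time is the cost of one feasibility test multiplied by a factor polynomial in $\ell$, $d^{\Order(n)}$, and $m$. The boundedness (and detection of unboundedness) is handled by the results cited after Theorem~\ref{th:encoding-length-convex-optimum}.

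Next I would analyze a single feasibility test by the Lenstra-type algorithm outlined in the excerpt. The region $F_\alpha$ is convex (each $g_i$ and $f$ being quasi-convex, their lower level sets are convex, and $F_\alpha$ is their intersection). The key geometric step is to apply the shallow-cut ellipsoid method to $F_\alpha$, producing concentric inscribed and circumscribed ellipsoids differing by a factor $\beta$ depending only on~$n$. One then computes an LLL-reduced basis of $\Z^n$ with respect to the norm associated with the ellipsoid; the last basis vector yields an $\eta$-approximate lattice width direction for the ellipsoids, hence a $\beta\eta$-approximate lattice width direction $\ve w$ for $F_\alpha$. Branching on the hyperplanes $\ve w^\top\ve x=\gamma$ that meet $F_\alpha$ creates only a bounded number of subproblems, each of dimension $n-1$, and recursing down the dimension gives a recursion tree whose size is controlled by the width bound.

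The running time then accumulates as follows. Each shallow-cut ellipsoid iteration requires evaluating the quasi-convex constraints and computing subgradients, contributing the factor $m\,\ell^{\Order(1)}$; the geometry-of-numbers and LLL steps contribute $d^{\Order(n)}2^{\Order(n^3)}$ (the $2^{\Order(n^3)}$ absorbing the cost of basis reduction and the branching fan-out across all $n$ levels of recursion); and the width bound keeps the number of branches per level polynomially bounded in fixed dimension. Multiplying the per-test cost by the number of binary-search steps, and checking that none of the intermediate numbers grows beyond encoding size $\ell\,d^{\Order(n)}$ (which is exactly the bound on the output solution $\ve x^*$ claimed in the theorem), yields the stated complexity $m\,\ell^{\Order(1)}d^{\Order(n)}2^{\Order(n^3)}$.

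\textbf{The main obstacle} I expect is the careful bookkeeping of the encoding sizes through the ellipsoidal-rounding and LLL steps: one must verify that applying the shallow-cut ellipsoid method to a region defined by \emph{quasi-convex polynomials} (rather than linear inequalities) still yields rounding ellipsoids whose defining data have polynomially bounded encoding length, and that the approximate-width direction $\ve w$ is primitive with controlled coefficient size. The quasi-convexity, as opposed to genuine convexity, is delicate because one cannot directly use a global separation oracle from a single gradient; one must instead exploit that each lower level set is convex to supply the shallow separating hyperplanes required by the ellipsoid method. Getting the dimensional recursion to telescope into the clean exponent $2^{\Order(n^3)}$, rather than a worse tower, is the technically demanding part and is the crux of Heinz's refinement over earlier variants.
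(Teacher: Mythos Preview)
Your proposal is correct and follows essentially the same approach as the paper: binary search over the objective value (using the bound of Theorem~\ref{th:encoding-length-convex-optimum}) reduces to feasibility, which is handled by a Lenstra-type algorithm based on shallow-cut ellipsoidal rounding and LLL-based approximate lattice width directions. The paper resolves precisely the obstacle you flag---supplying shallow separating hyperplanes for level sets of \emph{quasi}-convex polynomials---via an explicit shallow separation oracle that places $d$ test points along each coordinate axis inside the current ellipsoid and uses gradient evaluations of the violated polynomial at these points (rather than subgradients of the level set) to produce the cut; this, together with a volume lower bound for feasible regions containing an integer point, yields the stated complexity.
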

\par
%% Comment on origin of 2^{\mathrm{O}(n^3)}.
We remark that the complexity guarantees can be improved dramatically by
combining Heinz' technique with more recent variants of Lenstra's algorithm
that rely on the fast computation of shortest vectors
\cite{hildebrand:faster-heinz}.

A complexity result of greater generality was presented by
Khachiyan and Porkolab~\cite{khachiyan-porkolab:00}.  It covers the case of
minimization of convex polynomials over the integer points in convex
semialgebraic sets given by \emph{arbitrary} (not necessarily quasi-convex)
polynomials.
\begin{theorem}\label{th:khachiyan-porkolab-complexity}
  Let $Y\subseteq\R^{n_0}$ be a convex set given by
  \begin{displaymath}
    Y = \bigl\{\, \mathbf y\in\R^{n_0} :
    \mathrm{Q}_1\mathbf x^1\in\R^{n_1}\colon \cdots\ \mathrm{Q}_\omega\mathbf
    x^\omega\in\R^{n_\omega}\colon
    P(\mathbf y, \mathbf x^1,\dots,\mathbf x^\omega) \,\bigr\}
  \end{displaymath}
  with quantifiers $\mathrm Q_i\in\{\exists,\forall\}$, where $P$ is a Boolean
  combination of polynomial inequalities
  \begin{displaymath}
    g_i(\mathbf y, \mathbf x^1,\dots,\mathbf x^\omega) \leq 0,\quad i=1,\dots,m
  \end{displaymath}
  with degrees at most~$d\geq2$ and coefficients of binary encoding size at
  most~$\ell$.  There exists an algorithm for solving the problem
  \begin{math}
    \min \{\, y_{n_0} : \mathbf y \in Y\cap\Z^{n_0} \,\}
  \end{math}
  in time $\ell^{\mathrm{O}(1)} (md)^{\mathrm{O}(n_0^4) \prod_{i=1}^\omega \mathrm{O}(n_i)}$.
\end{theorem}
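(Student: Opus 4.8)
The plan is to reduce the integer minimization problem to a bounded sequence of integer \emph{feasibility} problems, and to solve each of those by a Lenstra-type algorithm whose geometric primitives are realized through the decision procedure for the first-order theory of the reals that underlies Theorem~\ref{th:renegar-approx}. First I would observe that, since $Y$ is convex and the objective $y_{n_0}$ is linear, a bound in the spirit of Theorem~\ref{th:renegar-bound} applies: if the minimum is finite, then every relevant connected component of $Y$ already meets a ball of radius $r$ with $\log r$ singly exponential in the dimensions (for a fixed number $\omega$ of quantifier blocks), so the optimal integer objective value lies in a range whose binary encoding length is polynomially bounded. Binary search over this range then reduces the optimization problem to $\Order(\log r)$ feasibility queries of the form ``does there exist $\mathbf y\in\Z^{n_0}$ with $\mathbf y\in Y$ and $y_{n_0}\le\alpha$?'', each one living over the convex region $Y_\alpha = Y\cap\{\,\mathbf y : y_{n_0}\le\alpha\,\}$.

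Next I would solve each feasibility query by the Lenstra-type scheme of Section~\ref{s:convex-min-fixed-dim}. The crucial point is that $Y_\alpha$ is accessed only through a \emph{separation-type oracle}: given a rational point $\mathbf y$, decide whether $\mathbf y\in Y_\alpha$ and, if not, return a (shallow) separating direction. Because membership $\mathbf y\in Y$ is exactly the truth of the given quantified formula with $\mathbf y$ substituted for the free variable, such an oracle call is a decision problem for a sentence in the theory of the reals with $\omega$ quantifier blocks of sizes $n_1,\dots,n_\omega$, which is answered by the algorithm of Theorem~\ref{th:renegar-approx} at cost $(md)^{\prod_{i=1}^\omega \Order(n_i)}$ up to $\ell^{\Order(1)}$ and logarithmic factors. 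With this oracle in hand, the shallow-cut ellipsoid method produces concentric inscribed and circumscribed ellipsoids for $Y_\alpha$ differing by a factor depending only on $n_0$; LLL reduction in the norm of the rounding ellipsoid yields an approximate lattice-width direction $\mathbf w\in\Z^{n_0}$; if the width of $Y_\alpha$ along $\mathbf w$ is small, we branch on the few lattice hyperplanes $\mathbf w^\top\mathbf y=\gamma$ and recurse in dimension $n_0-1$, whereas if $Y_\alpha$ is ``fat'' the inscribed ellipsoid already certifies an integer point.

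The main obstacle will be the complexity bookkeeping: one must track how the cost $(md)^{\prod_i \Order(n_i)}$ of a single decision call compounds through the $\mathrm{poly}(n_0)$ ellipsoid iterations and the depth-$n_0$ branching recursion, and verify that the total collapses to $\ell^{\Order(1)} (md)^{\Order(n_0^4)\prod_{i=1}^\omega \Order(n_i)}$ rather than to something worse. A second, more delicate point is that the oracle furnished by Theorem~\ref{th:renegar-approx} is only \emph{approximate}: it locates $\epsilon$-approximate solutions of the defining formula rather than exact points of $Y$. One therefore has to choose the precision $\epsilon$ fine enough (its contribution $\log\tfrac1\epsilon$ enters the running time only logarithmically) that the ellipsoidal rounding and the lattice-width estimates remain valid, while still correctly distinguishing genuine feasibility of $Y_\alpha\cap\Z^{n_0}$ from infeasibility. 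Convexity of $Y$ is what makes both the ellipsoidal rounding and this robustness analysis go through, and it is the only hypothesis imposed on $Y$; notably, the individual polynomials $g_i$ need not be quasi-convex, which is precisely the source of the added generality over Theorem~\ref{th:heinz-complexity}.
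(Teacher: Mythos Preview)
The paper does not give a proof of this theorem at all: it is quoted, with attribution to Khachiyan and Porkolab~\cite{khachiyan-porkolab:00}, as a black-box result of greater generality than Theorem~\ref{th:heinz-complexity}, and the surrounding text only contrasts the two complexity bounds. The detailed argument that follows in Section~\ref{s:convex-min} (the shallow separation oracle, Corollary~\ref{th:shallow-cut-ellipsoid-method}, etc.) is a proof sketch for Theorem~\ref{th:heinz-complexity}, the \emph{quasi-convex} case, not for the Khachiyan--Porkolab theorem. So there is nothing in the paper to compare your proposal against.

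That said, your outline is in spirit the Khachiyan--Porkolab strategy, and it is worth pointing out where the real content lies that your sketch glosses over. Membership $\mathbf y\in Y$ is, after substitution, a \emph{sentence} with $\omega$ blocks; Renegar's decision procedure answers it exactly, so the ``approximation'' worry you raise is misplaced for that step. The genuine issue is the separation oracle: you write ``if not, return a (shallow) separating direction'' as though this were automatic, but it is precisely here that the absence of quasi-convexity of the individual $g_i$ bites. Heinz's oracle (the lemma the paper proves) works by evaluating gradients $\nabla g_i$ at test points; that is unavailable here. Instead one must \emph{express} the condition ``$\mathbf c$ is a valid shallow cut for $Y_\alpha$ at $\hat{\mathbf y}$'' as a first-order formula---which introduces $\mathbf c\in\R^{n_0}$ as free variables and an additional universal block $\forall\mathbf y\in\R^{n_0}$ on top of the $\omega$ blocks already present---and then invoke Theorem~\ref{th:renegar-approx} to locate such a $\mathbf c$. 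Those extra $n_0$-sized blocks, compounded through the ellipsoid iterations and the depth-$n_0$ Lenstra recursion, are what produce the $\Order(n_0^4)$ in the exponent rather than Heinz's $\Order(n_0)$; this is the substantive step, not ``bookkeeping'' as you call it.
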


When the dimension~$n_0+n_1+\dots+n_\omega$ is fixed, the algorithm runs in
polynomial time.  For the case of convex minimization where the feasible region is described by
convex polynomials, the complexity bound of
Theorem \ref{th:khachiyan-porkolab-complexity}, however, translates to $\ell^{\mathrm{O}(1)}
m^{\mathrm{O}(n^2)} d^{\mathrm{O}(n^4)}$, which is worse than the bound of
Theorem \ref{th:heinz-complexity}
\cite{heinz-2005:integer-quasiconvex}.\medbreak

In the remainder of this subsection, we describe the ingredients of the
variant of Lenstra's algorithm due to Heinz.  The algorithm starts out by
``rounding'' the feasible region, by applying the shallow-cut ellipsoid method
to find proportional inscribed and circumscribed ellipsoids.  It is well-known
\cite{GroetschelLovaszSchrijver88} that the shallow-cut ellipsoid method only
needs an initial circumscribed ellipsoid that is ``small enough'' (of
polynomial binary encoding size -- this follows from
Theorem \ref{th:encoding-length-convex-optimum}) and an implementation of a
\emph{shallow separation oracle}, which we describe below.
%%\autoref{s:shallow-separation-oracle}.

%% \subsubsection{A shallow separation oracle}
%% \label{s:shallow-separation-oracle}

For a positive-definite matrix~$A$ we denote by $\mathcal{E}(A,\mathbf{\hat x})$ the ellipsoid $\{\, \mathbf
x\in\R^n : {(\mathbf x - \mathbf{\hat x})}^\top A (\mathbf x - \mathbf{\hat x}) \leq 1 \,\}$.

\begin{lemma}[Shallow separation oracle]
  Let $g_0,\dots,g_{m+1}\in\Z[\mathbf x]$ be quasi-convex polynomials of degree at
  most~$d$, the binary encoding sizes of whose coefficients are at most~$r$.
  Let the (continuous) feasible region~$F = \{\,\mathbf x\in\R^n :
  g_i(\mathbf x) < 0\,\}$ be contained in the ellipsoid~$\mathcal{E}(A,\mathbf{\hat x})$,
  where $A$ and $\mathbf{\hat x}$ have binary encoding size at most~$\ell$.
  There exists an algorithm with running time $m (lnr)^{\mathrm{O}(1)}
  d^{\mathrm{O}(n)}$ that outputs
  \begin{enumerate}[\rm(a)]
  \item ``true'' if
    \begin{equation}\label{eq:tough-ellipsoid}
      \mathcal{E}((n+1)^{-3} A, \mathbf{\hat x}) \subseteq F \subseteq \mathcal{E}(A, \mathbf{\hat
        x});
    \end{equation}
  \item otherwise, a vector $\mathbf c\in\Q^n\setminus\{\mathbf0\}$ of binary encoding
    length $(l+r) (dn)^{\mathrm{O}(1)}$ with
    \begin{equation}\label{eq:shallow-cut}
      F \subseteq \mathcal{E}(A, \mathbf{\hat x}) \cap
      \bigl\{\, \mathbf x \in\R^n : \mathbf c^\top (\mathbf x - \mathbf{\hat x}) \leq \tfrac1{n+1}
      (\mathbf c^\top A\mathbf c)^{1/2} \,\bigr\}.
    \end{equation}
  \end{enumerate}
\end{lemma}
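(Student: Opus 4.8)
\noindent\emph{Proof sketch.}
The plan is to reduce everything to the geometry of a Euclidean ball and then exploit quasi-convexity to convert a single nonnegative value of some $g_i$ into a shallow cut. After the affine change of coordinates $\mathbf x \mapsto A^{1/2}(\mathbf x - \hat{\mathbf x})$ --- which carries $\mathcal{E}(A,\hat{\mathbf x})$ to the unit ball centered at the origin, preserves polynomial degrees, and preserves quasi-convexity, since affine images of convex lower level sets are again convex --- the inner inclusion in \eqref{eq:tough-ellipsoid} becomes the statement that the concentric ball of radius $(n+1)^{-3/2}$ is contained in every open level set $\{g_i < 0\}$, while the outer inclusion $F \subseteq \mathcal{E}(A,\hat{\mathbf x})$ is furnished by hypothesis. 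Thus the oracle reduces to the following task, to be run for each $i$: decide whether $g_i$ stays strictly negative on the closed shrunk ellipsoid $\mathcal{E}((n+1)^{-3}A,\hat{\mathbf x})$, and, if not, return a point of that ellipsoid at which $g_i$ is nonnegative.

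First I would dispose of the center. If $g_i(\hat{\mathbf x}) \geq 0$ for some $i$, then $\hat{\mathbf x} \notin F$, and for every $\mathbf x \in F$ we have $g_i(\mathbf x) < 0 \leq g_i(\hat{\mathbf x})$; quasi-convexity then yields the gradient inequality $\nabla g_i(\hat{\mathbf x})^\top(\mathbf x - \hat{\mathbf x}) \leq 0$, so $\mathbf c = \nabla g_i(\hat{\mathbf x})$ is a valid central cut, and a central cut satisfies \eqref{eq:shallow-cut} a fortiori since its right-hand side is nonnegative. Otherwise the center is strictly feasible for all constraints, and for each $i$ I would maximize $g_i$ over the closed shrunk ellipsoid. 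If all these maxima are negative, the inner inclusion of \eqref{eq:tough-ellipsoid} holds and the oracle returns ``true''. If instead some $g_i$ attains a value $\geq 0$ at a point $\mathbf z$ of the shrunk ellipsoid, then by quasi-convexity $F \subseteq \{g_i < 0\} \subseteq \{\mathbf x : \nabla g_i(\mathbf z)^\top(\mathbf x - \mathbf z) \leq 0\}$; setting $\mathbf c = \nabla g_i(\mathbf z)$, a Cauchy--Schwarz estimate bounds the offset $\mathbf c^\top(\mathbf z - \hat{\mathbf x})$ by $(n+1)^{-3/2}$ times the half-width $(\mathbf c^\top A \mathbf c)^{1/2}$ of the ellipsoid in direction $\mathbf c$, and since $(n+1)^{-3/2} \leq \tfrac{1}{n+1}$ this is exactly the shallow-cut inequality \eqref{eq:shallow-cut}.

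The computational heart is the maximization of a quasi-convex polynomial over an ellipsoid. Since a quasi-convex function attains its maximum over a compact convex body at an extreme point, and the extreme points of an ellipsoid are precisely its boundary sphere, I would restrict to that sphere and solve the associated Lagrange critical-point system --- the gradient of $g_i$ parallel to the radial direction, together with the sphere equation --- a system of $n$ polynomial equations of degree $\Order(d)$. Such a system can be solved, or its real solutions $\epsilon$-approximated to prescribed accuracy, in time $d^{\Order(n)}$ times a polynomial in the encoding lengths, using the real-solving and quantifier-elimination machinery reviewed in Section~\ref{s:renegar} (cf.\ Theorem~\ref{th:renegar-approx}). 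This is the source of the factor $d^{\Order(n)}$ in the running time $m(\ell n r)^{\Order(1)} d^{\Order(n)}$, the factor $m$ coming from looping over the $m+2$ constraints.

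The main obstacle, and the part needing the most care, is to make this maximization \emph{approximate} yet \emph{robust}. One computes $\mathbf z$ and $\nabla g_i(\mathbf z)$ only to finite precision, so the returned half-space must remain valid under perturbation: here the gap between the radius $(n+1)^{-3/2}$ of the shrunk ellipsoid and the shallow tolerance $\tfrac1{n+1}$ supplies the slack that absorbs the error in the offset bound, while the sign decision in the ``true'' branch requires exact sign determination of the (algebraic) maximal value. In the same step I would check $\nabla g_i(\mathbf z) \neq \mathbf 0$, which holds because the gradient of a quasi-convex polynomial cannot vanish on the boundary of a full-dimensional lower level set, the degenerate lower-dimensional cases being detectable separately; and I would track encoding lengths through the coordinate change by $A^{1/2}$ and through the root-finding, so that $\mathbf c$ has binary encoding length $(\ell+r)(dn)^{\Order(1)}$ as claimed. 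Finally I would attend to the strict-versus-nonstrict subtleties arising because $F$ is defined by strict inequalities whereas the ellipsoids are closed. \hfill$\square$
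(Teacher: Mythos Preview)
Your approach is genuinely different from the paper's, and the difference matters.  You propose to \emph{maximize} each $g_i$ over the shrunk ellipsoid via Lagrange conditions and Renegar-type real root finding, take the gradient at the maximizer as the cut direction, and absorb the approximation error in the slack between $(n+1)^{-3/2}$ and $\tfrac1{n+1}$.  The paper never solves a continuous optimization problem.  After the same affine normalization, it places $d$ rational test points $\mathbf x_{ij}=\lambda_{ij}\mathbf e_i$ along each of the $2n$ coordinate semi-axes, with $\tfrac1{n+3/2}<\lambda_{i1}<\cdots<\lambda_{id}<\tfrac1{n+1}$, and simply evaluates the $g_k$ and $\nabla g_k$ there.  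If all $2n$ innermost points $\mathbf x_{i,1}$ are feasible, the cross-polytope they span contains the ball $\mathcal{E}((n+1)^{-3},\mathbf 0)$, so output ``true''.  Otherwise some $\mathbf x_{i,1}$ (or the center) violates some $g_k<0$; restricting to that axis gives a univariate polynomial $\phi(\lambda)=g_k(\lambda\mathbf e_i)$ of degree at most~$d$, so $\phi'$ has at most $d-1$ zeros, and among the $d$ values $\lambda_{i,1},\dots,\lambda_{i,d}$ at least one has $\phi'(\lambda_{i,j})\neq 0$, whence $\nabla g_k(\mathbf x_{i,j})\neq\mathbf 0$.  That gradient, evaluated at a rational point, is the cut.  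Everything is elementary polynomial evaluation at rationals, the $d^{\mathrm O(n)}$ factor comes only from the coefficient growth in the transformed polynomials, and $\mathbf c$ is immediately rational with controlled encoding length.

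Your route has a real gap at exactly the point you flag as delicate.  The assertion that ``the gradient of a quasi-convex polynomial cannot vanish on the boundary of a full-dimensional lower level set'' is false: take $g(\mathbf x)=(\|\mathbf x\|^2-1)^3$.  Its sublevel sets are concentric balls, so $g$ is quasi-convex; the set $\{g<0\}$ is the open unit ball; and $\nabla g=6(\|\mathbf x\|^2-1)^2\,\mathbf x$ vanishes identically on the unit sphere.  In your scheme, if the shrunk ellipsoid had radius exactly~$1$, your maximizer $\mathbf z$ would sit on that sphere with $g(\mathbf z)=0\geq 0$ and $\nabla g(\mathbf z)=\mathbf 0$, producing no cut.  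Even away from such degeneracies, Renegar's machinery hands you $\mathbf z$ only as an $\epsilon$-approximation to an algebraic point, so $\nabla g_i(\mathbf z)$ is an algebraic vector you can only approximate; you still owe an argument that some rational perturbation remains a valid shallow cut \emph{and} has binary encoding length $(\ell+r)(dn)^{\mathrm O(1)}$.  The paper's device of $d$ test points per semi-axis together with the univariate degree count is precisely what sidesteps both the vanishing-gradient problem and the irrationality problem in one stroke.
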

\begin{proof}
  We give a simplified sketch of the proof, without hard complexity estimates.
  By applying an affine transformation to $F\subseteq\mathcal{E}(A,\mathbf{\hat x})$, we
  can assume that $F$ is contained in the unit ball~$\mathcal{E}(I,\mathbf0)$.  Let us
  denote as usual
  by $\mathbf e_1,\dots,\mathbf e_n$ the unit vectors and by $\mathbf e_{n+1},\dots,\mathbf
  e_{2n}$ their negatives.  The algorithm first constructs numbers
  $\lambda_{i1}, \dots, \lambda_{id} > 0$ with
  \begin{equation}\label{eq:lambda-bounds}
    \frac{1}{n+\frac32} < \lambda_{i1} < \dots < \lambda_{id} < \frac1{n+1}
  \end{equation}
  and the corresponding point sets
  \begin{math}
    B_i = \{\, \mathbf x_{ij} := \lambda_{ij} \mathbf
    e_i : j=1,\dots,d\,\};
  \end{math}
  see Figure \ref{fig:shallowcut-0}\,(a).  The choice of the bounds
  \eqref{eq:lambda-bounds} for~$\lambda_{ij}$ will ensure that we either find
  a large enough inscribed ball for~(a) or a deep enough cut for~(b).
  \begin{figure}[t]
    \centering
    (a)\hspace{-.5cm}
    \ifpdf
    \input{shallowcut-0.pdf_t}
    \else
    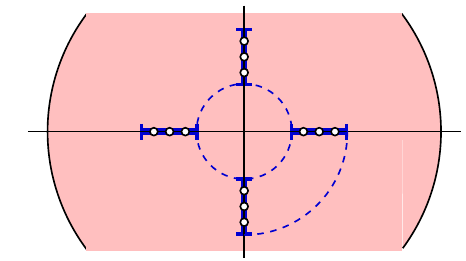
    \fi
    \quad(b)\hspace{-.5cm}
    \ifpdf
    \input{shallowcut.pdf_t}
    \else
    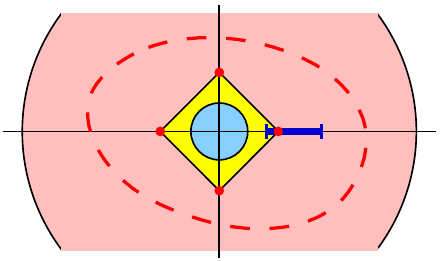
    \fi
    \caption{The implementation of the shallow separation oracle.
      \textbf{(a)}~Test points $\mathbf x_{ij}$ in the circumscribed ball~$\mathcal{E}(1,\mathbf0)$.
      \textbf{(b)}~Case~I: All
      test points $\mathbf x_{i1}$ are (continuously) feasible; so their convex hull
      (a~cross-polytope) and its inscribed ball $\mathcal{E}((n+1)^{-3},\mathbf0)$ are
      contained in the (continuous) feasible region~$F$.
    }
    \label{fig:shallowcut-0}
    \label{fig:shallowcut}
  \end{figure}%
  Then the algorithm determines the (continuous) feasibility
  of the center~$\mathbf0$ and the $2n$ innermost points $\mathbf x_{i,1}$.\smallbreak

  \emph{Case~I.}
  If $\mathbf x_{i,1}\in F$ for $i=1,\dots, 2n$, then the cross-polytope
  $\mathop{\mathrm{conv}}\{\,\mathbf x_{i,1} : i = 1,\dots,2n\,\}$ is contained in~$F$; see
  Figure \ref{fig:shallowcut}\,(b).
  An easy
  calculation shows that the ball $\mathcal{E}((n+1)^{-3},\mathbf0)$ is contained in the
  cross-polytope and thus in~$F$; see Figure \ref{fig:shallowcut}.  Hence the
  condition in~(a) is satisfied and the algorithm outputs ``true''.\smallbreak

  \emph{Case~II.}  We now discuss the case when the center~$\mathbf 0$ violates a
  polynomial inequality $g_0(\mathbf x)<0$ (say).  Let $F_0 = \{\, \mathbf
  x\in\R^n: g_0(\mathbf x)<0\,\}\supseteq F$.  Due to convexity of~$F_0$, for all
  $i=1,\dots,n$, one set of each pair $B_i\cap F_0$ and $B_{n+i}\cap F_0$ must be
  empty; see~Figure~\ref{fig:shallowcut-2b}\,(a).  Without loss of generality, let us
  assume $B_{n+i}\cap F_0=\emptyset$ for all~$i$.
  \begin{figure}[t]
    \centering
    (a)\hspace{-.5cm}
    \ifpdf
    \input{shallowcut-2b.pdf_t}
    \else
    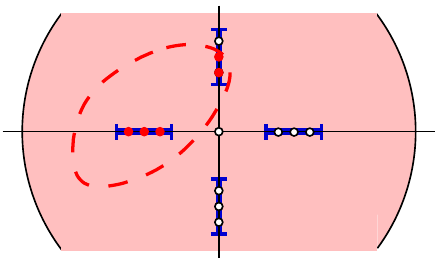
    \fi
    \quad(b)\hspace{-.5cm}
    \ifpdf
    \input{shallowcut-2a.pdf_t}
    \else
    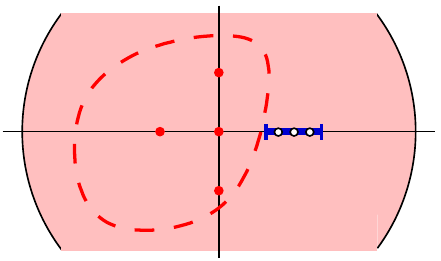
    \fi
    \caption{The implementation of the shallow separation oracle.  \textbf{(a)}~Case~II:
      The center~$\mathbf 0$ violates a polynomial inequality $g_0(\mathbf x)<0$
      (say).  Due to convexity, for all $i=1,\dots,n$, one set of each pair
      $B_i\cap F$ and $B_{n+i}\cap F$ must be empty.
      \textbf{(b)}~Case~III: A
      test point $\mathbf x_{k1}$ is infeasible, as it violates an
      inequality $g_0(\mathbf x)<0$ (say).  However, the center~$\mathbf 0$ is
      feasible at least for this inequality.}
    \label{fig:shallowcut-2a}
    \label{fig:shallowcut-2b}
  \end{figure}%
  We can determine whether a $n$-variate polynomial function of known maximum
  degree~$d$ is constant by evaluating it on $(d+1)^n$ suitable points (this
  is a consequence of the Fundamental Theorem of Algebra).  For our case of
  quasi-convex polynomials, this can be improved; indeed, it suffices to test
  whether the gradient $\nabla g_0$ vanishes on the $nd$ points in the set
  $B_1\cup\dots\cup B_{n}$.  If it does, we know that~$g_0$ is constant, thus
  $F=\emptyset$, and so can we return an arbitrary vector~$\mathbf c$.  Otherwise,
  there is a point $\mathbf x_{ij}\in B_i$ with $\mathbf c:= \nabla g_0(\mathbf x_{ij})\neq\mathbf0$;
  we return this vector as the desired normal vector of a shallow cut.
  Due to the choice of $\lambda_{ij}$ as a number smaller than~$\frac1{n+1}$,
  the cut is deep enough into the ellipsoid~$\mathcal{E}(A,\mathbf{\hat x})$,
  so that~\eqref{eq:shallow-cut} holds.
  \smallbreak

  \emph{Case~III.}  The remaining case to discuss is when $\mathbf0 \in F$ but
  there exists a $k \in\{1,\dots,2n\}$ with $\mathbf x_{k,1}\notin F$.  Without
  loss of generality, let $k=1$, and let $\mathbf x_{1,1}$ violate the polynomial
  inequality $g_0(\mathbf x)<0$, i.e., $g_0(\mathbf x_{1,1})\geq0$; see
  Figure \ref{fig:shallowcut-2a}\,(b).
  We consider the univariate polynomial
  $\phi(\lambda) = g_0(\lambda \mathbf e_i)$.  We have $\phi(0) = g_0(\mathbf0)<0$
  and $\phi(\lambda_{1,1}) \geq 0$, so $\phi$ is not constant.  Because $\phi$ has
  degree at most $d$, its derivative~$\phi'$ has degree at most~$d-1$, so
  $\phi'$ has at most $d-1$ roots.  Thus, for at least one of the $d$ different values
  $\lambda_{1,1}, \dots, \lambda_{1,d}$, say $\lambda_{1,j}$, we must have
  $\phi'(\lambda_{1,j})\neq0$.  This implies that $\mathbf c := \nabla g_0(\mathbf
  x_{1,j}) \neq\mathbf0$.  By convexity, we have $\mathbf x_{1,j}\notin F$, so we can
  use $\mathbf c$ as the normal vector of a shallow cut.
\end{proof}

By using this oracle in the shallow-cut ellipsoid method, one obtains
the following result.
\begin{corollary}\label{th:shallow-cut-ellipsoid-method}
  Let $g_0,\dots,g_{m}\in\Z[\mathbf x]$ be quasi-convex polynomials of degree at
  most~$d\geq2$.  Let the (continuous) feasible region~$F = \{\,\mathbf x\in\R^n :
  g_i(\mathbf x) \leq 0\,\}$ be contained in the ellipsoid~$\mathcal{E}(A_0,\mathbf0)$, given
  by the positive-definite matrix~$A_0\in\Q^{n\times n}$.  Let
  $\epsilon\in\Q_{>0}$ be given.  Let the entries of $A_0$ and the
  coefficients of all monomials of $g_0,\dots,g_{m}$ have binary encoding size
  at most~$\ell$.

  There exists an algorithm with running time $m (\ell
  n)^{\mathrm{O}(1)} d^{\mathrm{O}(n)}$ that computes a positive-definite
  matrix~$A\in\Q^{n\times n}$ and a point~$\mathbf{\hat x}\in\Q^n$ with
  \begin{enumerate}[\rm(a)]
  \item either $\mathcal{E}((n+1)^{-3} A, \mathbf{\hat x}) \subseteq F \subseteq \mathcal{E}(A, \mathbf{\hat x})$
  \item or $F \subseteq \mathcal{E}(A, \mathbf{\hat x})$ and $\mathop{\mathrm{vol}} \mathcal{E}(A, \mathbf{\hat x})
    <\epsilon$.
  \end{enumerate}
\end{corollary}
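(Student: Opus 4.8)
The plan is to run the \emph{shallow-cut ellipsoid method} of Gr\"otschel, Lov\'asz and Schrijver \cite{GroetschelLovaszSchrijver88}, driven by the algorithm of the preceding Lemma as its shallow separation oracle. We initialize with the circumscribed ellipsoid $\mathcal{E}(A_0,\mathbf0)\supseteq F$, which is available by hypothesis and has binary encoding size polynomial in~$\ell$ and~$n$. At a generic step we hold an ellipsoid $\mathcal{E}(A,\mathbf{\hat x})\supseteq F$ and call the oracle. If it answers ``true'', then by \eqref{eq:tough-ellipsoid} the current pair $(A,\mathbf{\hat x})$ already satisfies alternative~(a) and we halt. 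Otherwise the oracle returns a vector~$\mathbf c$ for which \eqref{eq:shallow-cut} confines~$F$ to the intersection of $\mathcal{E}(A,\mathbf{\hat x})$ with the halfspace $\{\,\mathbf x:\mathbf c^\top(\mathbf x-\mathbf{\hat x})\le\frac1{n+1}(\mathbf c^\top A\mathbf c)^{1/2}\,\}$; we apply the standard shallow-cut update to this cut, obtaining a new circumscribed ellipsoid $\mathcal{E}(A',\mathbf{\hat x}')\supseteq F$ of strictly smaller volume, and iterate.

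Termination and correctness rest on the volume-reduction guarantee. The depth $\frac1{n+1}$ in \eqref{eq:shallow-cut} is precisely the depth for which the shallow-cut update is designed, so the classical analysis applies verbatim: every call that does not answer ``true'' shrinks the volume of the circumscribed ellipsoid by a factor bounded away from~$1$ that depends only on~$n$ (of order $\exp(-c/n^2)$). Since $\mathop{\mathrm{vol}}\mathcal{E}(A_0,\mathbf0)$ has encoding polynomial in~$\ell$ and~$n$, the quantity $\log\bigl(\mathop{\mathrm{vol}}\mathcal{E}(A_0,\mathbf0)/\epsilon\bigr)$ is polynomial in the input size, including $\log\frac1\epsilon$. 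Dividing by the per-step log-shrinkage $\Theta(1/n^2)$ bounds the number of iterations by a polynomial in $\ell$, $n$ and $\log\frac1\epsilon$. As soon as the volume of the current ellipsoid drops below~$\epsilon$ we stop and report alternative~(b); hence the loop always terminates with output (a) or~(b).

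It remains to keep the arithmetic polynomial. Iterating the exact update would blow up the encoding lengths of $A$ and $\mathbf{\hat x}$; the remedy is the \emph{rounded} shallow-cut ellipsoid method, in which after each update the entries of $A'$ and $\mathbf{\hat x}'$ are rounded to a polynomial number of bits and the ellipsoid is slightly inflated so that $F\subseteq\mathcal{E}(A',\mathbf{\hat x}')$ is preserved while the geometric volume decrease survives with a marginally smaller, still $n$-dependent, factor \cite{GroetschelLovaszSchrijver88}. This keeps the encoding size of the running ellipsoid bounded by a polynomial in~$\ell$ and~$n$ throughout, so that by the Lemma each oracle call costs $m(\ell n)^{\mathrm{O}(1)}d^{\mathrm{O}(n)}$ time. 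Multiplying by the polynomial iteration count yields the asserted total running time $m(\ell n)^{\mathrm{O}(1)}d^{\mathrm{O}(n)}$, where $\ell$ is understood to bound the whole input including~$\epsilon$.

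The main obstacle is not the geometry but precisely this bookkeeping: one must check that the rounding step neither destroys the invariant $F\subseteq\mathcal{E}(A,\mathbf{\hat x})$ nor erodes the per-step volume contraction below a positive $n$-dependent threshold, and that the inflated ellipsoids retain polynomially bounded encoding. These are exactly the estimates carried out for the shallow-cut ellipsoid method in \cite{GroetschelLovaszSchrijver88}, which I would invoke rather than reprove; the only problem-specific verification is that the cut depth $\frac1{n+1}$ furnished by \eqref{eq:shallow-cut} matches the depth assumed in that analysis, so that the cited volume bound transfers unchanged.
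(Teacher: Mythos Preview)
Your proposal is correct and follows exactly the approach the paper intends: the paper's own ``proof'' is the single sentence ``By using this oracle in the shallow-cut ellipsoid method, one obtains the following result,'' and you have faithfully unpacked what that entails, including the rounding considerations from \cite{GroetschelLovaszSchrijver88}. The only minor point worth noting is the discrepancy between the strict inequalities $g_i(\mathbf x)<0$ in the Lemma and the non-strict ones in the Corollary, but this is a presentational wrinkle in the paper rather than a gap in your argument.
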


Finally, there is a lower bound for the volume of a continuous feasible
region~$F$ that can contain an integer point.

\begin{lemma}
  Under the assumptions of Corollary~\ref{th:shallow-cut-ellipsoid-method},
  if $F\cap\Z^n\neq \emptyset$, 
  there exists an $\epsilon\in\Q_{>0}$ of binary
  encoding size $\ell (dn)^{\mathrm{O}(1)}$ with $\mathop{\mathrm{vol}} F>\epsilon$.
\end{lemma}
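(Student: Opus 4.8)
The plan is to produce, around an integer feasible point, a Euclidean ball whose radius is bounded below by $2^{-\ell(dn)^{\mathrm{O}(1)}}$ and which lies entirely inside~$F$; its volume then supplies the required~$\epsilon$. The engine of the argument is an \emph{integrality gap}. Let $\mathbf z\in F\cap\Z^n$. Since the shallow separation oracle operates on the open region $\{\,\mathbf x:g_i(\mathbf x)<0\,\}$ and each $g_i$ has integer coefficients, the value $g_i(\mathbf z)$ is an integer; strict feasibility $g_i(\mathbf z)<0$ therefore forces
\[
  g_i(\mathbf z)\ \leq\ -1\qquad\text{for }i=0,\dots,m .
\]
This quantization is the decisive point: merely containing an integer point cannot bound the volume, since a degenerate, lower-dimensional~$F$ may still meet~$\Z^n$, whereas $g_i(\mathbf z)\le-1$ leaves room for a genuine neighborhood. (This is exactly why strict inequalities are the right ones to work with here.)

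Next I would fix the arena in which all estimates take place. Because $F\subseteq\mathcal E(A_0,\mathbf0)$ and the entries of~$A_0$ have encoding size at most~$\ell$, an eigenvalue estimate for the positive-definite rational matrix~$A_0$ gives $\lambda_{\min}(A_0)\geq 2^{-\ell(dn)^{\mathrm{O}(1)}}$, so $F$—and in particular~$\mathbf z$—lies in the ball $\{\,\|\mathbf x\|\leq R\,\}$ with $\log R=\ell(dn)^{\mathrm{O}(1)}$. On the slightly larger ball $\{\,\|\mathbf x\|\leq R+1\,\}$ I would then bound the gradient of each~$g_i$: writing $g_i$ as a sum of at most $\binom{n+d}{n}$ monomials of degree $\le d$ with integer coefficients of absolute value at most~$2^\ell$, every partial derivative is bounded in absolute value by $\binom{n+d}{n}\,d\,2^{\ell}(R+1)^{d-1}$. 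Hence $g_i$ is Lipschitz on this ball with a constant~$L$ satisfying $\log L=\ell(dn)^{\mathrm{O}(1)}$, the dominant contribution coming from the factor $(R+1)^{d-1}$.

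Now set $\rho=1/(2L)$. For any $\mathbf x$ with $\|\mathbf x-\mathbf z\|\le\rho$ the mean-value bound gives $g_i(\mathbf x)\le g_i(\mathbf z)+L\rho\le-1+\tfrac12<0$, so $B(\mathbf z,\rho)$ is strictly feasible and $B(\mathbf z,\rho)\subseteq F$. Consequently
\[
  \mathop{\mathrm{vol}} F\ \geq\ \mathop{\mathrm{vol}} B(\mathbf z,\rho)\ =\ \frac{\pi^{n/2}}{\Gamma(\tfrac n2+1)}\,\rho^{\,n}.
\]
Taking logarithms, $\log\frac1{\mathop{\mathrm{vol}}F}\le n\log\frac1\rho+\mathrm{O}(n\log n)=n\log L+\mathrm{O}(n\log n)=\ell(dn)^{\mathrm{O}(1)}$. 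I would then let $\epsilon$ be any rational of this encoding size that lower-bounds the right-hand side, for instance a suitable negative power of~$2$; this $\epsilon\in\Q_{>0}$ has binary encoding size $\ell(dn)^{\mathrm{O}(1)}$ and satisfies $\mathop{\mathrm{vol}} F>\epsilon$, as claimed.

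The only genuinely delicate step is the \emph{uniform} polynomial bound on~$L$: one must verify that the circumscribing radius~$R$ furnished by $\mathcal E(A_0,\mathbf0)$ has $\log R$ polynomial in $\ell$, $d$, and~$n$, and then track how the factor $(R+1)^{d-1}$ propagates through the gradient bound, so that after raising~$\rho$ to the $n$th power in the volume the exponent remains of the form $\ell(dn)^{\mathrm{O}(1)}$. Everything else follows directly from the integrality gap $g_i(\mathbf z)\le-1$ together with elementary estimates; notably, neither convexity nor quasi-convexity of the~$g_i$ is needed for this particular lemma, only their degree and coefficient bounds.
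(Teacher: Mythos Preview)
The survey states this lemma without proof; it is drawn from Heinz's paper on integer quasi-convex minimization. Your argument is the standard one and is correct: the integrality gap $g_i(\mathbf z)\le -1$ combined with a Lipschitz bound for the $g_i$ on a circumscribing ball yields an inscribed Euclidean ball around~$\mathbf z$ of radius $2^{-\ell(dn)^{\mathrm O(1)}}$, and the volume of that ball furnishes the required~$\epsilon$. The tracking of exponents (eigenvalue lower bound for~$A_0$ giving $\log R=\ell n^{\mathrm O(1)}$, the gradient bound dominated by $(R+1)^{d-1}$, and the final $n$th power in the volume) is accurate.

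One point deserves to be made explicit rather than left to a parenthetical remark. The lemma, as phrased, inherits the assumptions of the Corollary, where $F$ is defined via the \emph{non-strict} system $g_i(\mathbf x)\le 0$. Under that reading the conclusion is simply false: for $n\ge 2$ take the single quasi-convex (indeed convex) constraint $g_0(\mathbf x)=x_1^2$; then $F=\{x_1=0\}$ has volume zero yet $F\cap\Z^n$ is nonempty. You rightly work instead with the strict system $g_i(\mathbf x)<0$ that the shallow-separation-oracle lemma uses, and this is also the setting in Heinz's paper. For the overall algorithm this causes no loss, since for integer-coefficient $g_i$ one replaces $g_i\le 0$ by $g_i-1<0$ without changing $F\cap\Z^n$; but your write-up should state this replacement up front rather than gesturing at what ``the shallow separation oracle operates on,'' so that the hypothesis you actually use matches the statement you are proving.
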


On the basis of these results, one obtains a Lenstra-type algorithm for the
decision version of the convex integer minimization problem with the desired
complexity.  By applying binary search, the optimization problem can be
solved, which provides a proof of Theorem \ref{th:heinz-complexity}.

\subsection{Fixed dimension: Convex integer maximization}
\label{s:convex-max}
\label{s:convex-max-fixed-dim}

%% FIXME: Rewrite, consider mixed-integer case.
Maximizing a convex function over the integer points in a polytope in fixed
dimension can be done in polynomial time.  To see this, note that the optimal
value is taken on at a vertex of the convex hull of all feasible integer
points. But when the dimension is fixed, there is only a polynomial number of
vertices, as Cook et al.~\cite{cook-hartmann-kannan-mcdiarmid-1992} showed.
\begin{theorem}
  Let $P = \{\, \mathbf x\in\R^n : A\mathbf x\leq\mathbf b\,\}$ be a rational polyhedron
  with $A\in\Q^{m\times n}$ and let $\phi$ be the largest binary encoding size
  of any of the rows of the system~$A\mathbf x\leq\mathbf b$.  Let $P^{\mathrm{I}} =
  \mathop{\mathrm{conv}}(P\cap\Z^n)$ be the integer hull of~$P$.  Then the number of vertices
  of~$P^{\mathrm{I}}$ is at most $2 m^n{(6n^2\phi)}^{n-1}$.
\end{theorem}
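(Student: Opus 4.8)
The plan is to distribute the vertices of $P^{\mathrm I}$ among the bases of the continuous polyhedron~$P$ and then bound, for each basis, how many integer-hull vertices can be assigned to it. First I would set up the basis assignment. Every vertex $\mathbf v$ of $P^{\mathrm I}$ is the unique maximizer over $P\cap\Z^n$ of some objective vector $\mathbf c$ lying in the full-dimensional normal cone of $P^{\mathrm I}$ at~$\mathbf v$. Maximizing the same $\mathbf c$ over the continuous polyhedron~$P$ is attained on a face whose active constraints contain $n$ linearly independent rows $\mathbf a_i^\top\mathbf x\le b_i$, $i\in\sigma$; by LP optimality $\mathbf c$ is a nonnegative combination of the active normals, and by Carath\'eodory's theorem for cones we may take $\mathbf c\in\{\sum_{i\in\sigma}y_i\mathbf a_i: y_i\ge0\}$ for an $n$-element index set~$\sigma$. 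Assigning $\mathbf v$ to such a $\sigma$, and noting that there are at most $\binom{m}{n}\le m^n$ possible $\sigma$, it suffices to show that at most $2(6n^2\phi)^{n-1}$ vertices are assigned to any fixed basis.

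Next I would pass to slack coordinates to expose the relevant geometry. Fix $\sigma$ and clear denominators so that the rows $\mathbf a_i$, $i\in\sigma$, are integral; let $B$ be the nonsingular integer matrix with these rows and $\mathbf w=B^{-1}\mathbf b_\sigma$ its apex. In the coordinates $\mathbf u=\mathbf b_\sigma-B\mathbf x$ (so that $\mathbf u\ge\mathbf0$ on the cone and $\mathbf u=\mathbf0$ at $\mathbf w$), an objective $\mathbf c=B^\top\mathbf y$ with $\mathbf y\ge\mathbf0$ satisfies $\mathbf c^\top\mathbf x=\mathbf c^\top\mathbf w-\mathbf y^\top\mathbf u$, so maximizing $\mathbf c^\top\mathbf x$ is the same as minimizing the strictly positive functional $\mathbf y^\top\mathbf u$. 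Consequently the vertices assigned to~$\sigma$ are precisely the vertices of $\conv(U)$ that minimize some such functional, i.e.\ the \emph{staircase} vertices facing the origin of the image $U=\{\mathbf b_\sigma-B\mathbf z:\mathbf z\in P\cap\Z^n\}$, where these points $\mathbf u$ lie in a single coset of the lattice $B\Z^n$ of determinant $|\det B|$.

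The main obstacle is the per-basis count: bounding the number of staircase vertices by $(6n^2\phi)^{n-1}$. I would prove this by induction on~$n$, projecting $U$ along one coordinate direction so that each staircase vertex maps to a staircase vertex of an $(n-1)$-dimensional problem, and controlling the vertices lying over a fixed projected vertex by a two-dimensional estimate. The essential point, and the reason the bound is polynomial in the encoding length~$\phi$ rather than in the exponentially large magnitudes of $\mathbf b$ and the apex~$\mathbf w$, is the classical continued-fraction fact that the lower convex hull of lattice points under a line in a region of binary encoding size~$\phi$ has only $\Order(\phi)$ vertices; the dimension-dependent Cramer/Hadamard determinant estimates contribute the $\Order(n^2)$ factor and the additional $2$. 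The base case $n=1$ is a ray, whose staircase has the single vertex of smallest nonnegative slack, matching the exponent $n-1$.

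I expect two delicate points in carrying out Step~three. First, establishing the logarithmic-in-magnitude (hence linear-in-$\phi$) vertex bound in the multidimensional setting while keeping the determinant bounds under control through the integral affine change of coordinates $\mathbf u=\mathbf b_\sigma-B\mathbf x$; second, verifying that the projection genuinely carries staircase (Pareto-minimal) vertices to staircase vertices, so that the inductive hypothesis applies and the recursion closes without losing vertices. Combining the at most $m^n$ bases with the per-basis bound $2(6n^2\phi)^{n-1}$ then yields the claimed estimate $2m^n(6n^2\phi)^{n-1}$.
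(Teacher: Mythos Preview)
The paper does not give its own proof of this theorem; it merely cites it as a result of Cook, Hartmann, Kannan, and McDiarmid \cite{cook-hartmann-kannan-mcdiarmid-1992}, so there is nothing in the paper to compare your argument against directly.

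That said, your proposal is essentially the original Cook--Hartmann--Kannan--McDiarmid argument, and the overall architecture is sound: distribute the vertices of $P^{\mathrm I}$ among the at most $\binom{m}{n}\le m^n$ bases via their normal cones, change to slack coordinates so that the assigned vertices become the Pareto-minimal (``staircase'') lattice points of a shifted simplicial cone, and then bound those by an inductive argument whose base is the planar continued-fraction fact that the lower convex hull of lattice points under a segment of encoding size~$\phi$ has only $\Order(\phi)$ vertices. The two ``delicate points'' you flag are exactly the places where work is needed, and both are handled in the original paper: the determinant control under the change of coordinates is where the Hadamard-type factor $n^2$ and the constant~$6$ enter, and the projection step requires care because a staircase vertex need not project to a \emph{vertex} of the projected hull---one has to argue via the slacks themselves (showing, e.g., that some slack coordinate is bounded by a quantity of size $\Order(n^2\phi)$ and then stratifying by its value) rather than literally projecting convex hulls. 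If you write it up, make that step explicit; as stated, ``each staircase vertex maps to a staircase vertex of an $(n-1)$-dimensional problem'' is the one assertion that is not literally true without further explanation.
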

Moreover, Hartmann \cite{hartmann-1989-thesis} gave an algorithm for
enumerating all the vertices, which runs in polynomial time in fixed
dimension.

By using Hartmann's algorithm, we can therefore compute all the vertices of
the integer hull~$P^{\mathrm{I}}$, evaluate the convex objective function on
each of them and pick the best.  This simple method already provides a
polynomial-time algorithm.

%% \section{Impossibility of strongly polynomial-time algorithms}

\section{Strongly polynomial-time algorithms: Submodular function minimization}

In important specially structured cases, even strongly polynomial-time
algorithms are available.  The probably most well-known case is that of
submodular function minimization.  We briefly present the most recent
developments below.

%% as hochbaum \cite{hochbaum-2007:complexity-nonlinear} points out, one obstacle
%% to prove strongly polynomial-time algorithms is the unknown situation of
%% linear programming 

%%\subsection{Unconstrained 0/1 Programming}
\newcommand{\TimeEval}{T_{\text{eval}}}

Here we consider the important problem of \emph{submodular function
  minimization}.  This class of problems consists of unconstrained 0/1
programming problems
\begin{displaymath}
  \min f(\ve x) : \ve x\in\{0,1\}^n,
\end{displaymath}
where the function~$f$ is submodular, i.e., $$f(\ve x) + f(\ve y) \geq
f(\max\{\ve x,\ve y\}) + f(\min\{\ve x,\ve y\}).$$
Here $\max$ and $\min$ denote the componentwise maximum and minimum of the
vectors, respectively.

The fastest algorithm known for submodular function minimization seems to be
by Orlin \cite{orlin-2009:faster-submodular}, who gave a strongly
polynomial-time algorithm of running time $\Order(n^5 \TimeEval + n^6)$, where
$\TimeEval$ denotes the running time of the evaluation oracle.  The algorithm
is ``combinatorial'', i.e., it does not use the ellipsoid method.  This
complexity bound simultaneously improved that of the fastest strongly
polynomial-time algorithm using the ellipsoid method, of running time
$\tilde\Order(n^5 \TimeEval + n^7)$ (see
\cite{mccormick-2005:submodular-in-handbook}) and the fastest
``combinatorial'' strongly polynomial-time algorithm by Iwata
\cite{iwata-2003:faster-scaling-submodular}, of running time $\Order((n^6 \TimeEval
+ n^7)\log n)$.  We remark that the fastest polynomial-time algorithm, by
Iwata \cite{iwata-2003:faster-scaling-submodular}, runs in
$\Order((n^4\TimeEval + n^5)\log M)$, where $M$ is the largest function value.
We refer to the recent survey by Iwata \cite{iwata-2008:submodular-survey},
who reports on the developments that preceded Orlin's algorithm
\cite{orlin-2009:faster-submodular}.

For the special case of \emph{symmetric} submodular function minimization,
i.e., $f(\ve x) = f(\ve 1-\ve x)$, Queyranne
\cite{queyranne-1998:symmetric-submodular} presented an algorithm of running
time $\Order(n^3 \TimeEval)$.

\section*{Acknowledgments.}
The author wishes to thank the referees, in particular for their comments on the
presentation of the Lenstra-type algorithm, and his student Robert Hildebrand
for a subsequent discussion about this topic.

\clearpage

\bibliography{iba-bib,nl-and-eng,barvinok,weismantel,hemmecke,../50-years-chapter/jon,./biblio,../rat/pisa-papers/biblio}
\bibliographystyle{siam}

\clearpage

\end{document}